\def\cC{\mathcal{C}}
\def\cF{\mathcal{F}}
\def\cP{\mathcal{P}}
\def\eps{\varepsilon}
\def\Z{\mathbb{Z}}
\def\R{\mathbb{R}}
\def\C{\mathbb{C}}
\def\F{\mathbb{F}}
\def\E{\mathbb{E}}
\def\nil{\mathrm{nil}}
\def\unf{\mathrm{unf}}
\def\sml{\mathrm{sml}}
\DeclareMathOperator\spa{span}
\DeclareMathOperator\mo{mod}
\newcommand{\vect}[1]{\boldsymbol{#1}}
\newtheorem{firstthm}{Proposition}[section]
\newtheorem{thm}[firstthm]{Theorem}
\newtheorem{prop}[firstthm]{Proposition}
\newtheorem{lemma}[firstthm]{Lemma}
\theoremstyle{definition}
\newcommand{\subalign}[1]{%
  \vcenter{%
    \Let@ \restore@math@cr \default@tag
    \baselineskip\fontdimen10 \scriptfont\tw@
    \advance\baselineskip\fontdimen12 \scriptfont\tw@
    \lineskip\thr@@\fontdimen8 \scriptfont\thr@@
    \lineskiplimit\lineskip
    \ialign{\hfil$\m@th\scriptstyle##$&$\m@th\scriptstyle{}##$\hfil\crcr
      #1\crcr
    }%
  }%
}
\begin{document}

\begin{frontmatter}[classification=text]

\title{On a conjecture of Gowers and Wolf} 

\author[altman]{Daniel Altman}

\begin{abstract}
Gowers and Wolf have conjectured that, given a set of linear forms $\{\psi_i\}_{i=1}^t$ each mapping $\Z^D$ to $\Z$, if $s$ is an integer such that the functions $\psi_i^{s+1},\ldots, \psi_t^{s+1}$ are linearly independent, then averages of the form $\E_{\vect x} \prod_{i=1}^t f(\psi_i(\vect x))$ may be controlled by the Gowers $U^{s+1}$-norm of $f$. We prove (a stronger version of) this conjecture. 
\end{abstract}
\end{frontmatter}

\section{Introduction}\label{s:intro}
In \cite[Conjecture 2.5]{GW10}, Gowers and Wolf made a conjecture on the minimal Gowers norm of $f$ which is able to control averages of $f$ on a set of linear forms: $\E_{\vect x} \prod_{i=1}^t f(\psi_i(\vect x))$. In $\F_p^n$ ($p$ fixed, $n$ large), the conjecture was resolved by Gowers and Wolf for large enough $p$ \cite{GW10, GW11quad, GW11high} , and ultimately for all $p$ by Hatami--Hatami--Lovett \cite{HHL16}. Furthermore, Gowers and Wolf resolved the first nontrivial case of the conjecture in $\Z/N\Z$ in \cite{GW11cyc}. In the integers, the conjecture was considered to be resolved by Green and Tao until recently when it came to light (see \cite{GT20}, \cite{T20}) that the proof of \cite[Theorem 1.13]{GT10} requires an assumption on the system of linear forms in question.

Let $\Psi:=(\psi_i)_{i=1}^t$ be a family of linear forms, each mapping $\Z^D$ to $\Z$. For positive integers $k$, let $\Psi^{[k]}$ be the real vector space $\spa_{\R}\{(\psi_1(\vect x)^k, \ldots, \psi_t(\vect x)^k):\vect x \in \Z^D\} \leq \R^t$. The system of linear forms $\Psi$ satisfies \textit{the flag condition} if the containment of vector spaces $\Psi^{[k]} \leq \Psi^{[l]}$ holds whenever $k < l$.\footnote{We may also, at times, use \textit{flag} as an adjective.} 

Green and Tao's result \cite[Theorem 1.13]{GT10} holds for systems of linear forms that satisfy the flag condition. It is perhaps not \textit{a priori} clear to what extent this is a restriction on the space of linear forms and thus to what extent the Gowers-Wolf conjecture ought to be considered open. We begin with the modest remark that indeed there do exist systems of linear forms which do not satisfy the flag condition. Further, there are values of the parameters $(t,D)$ for which a system of linear forms on these parameters is generically not flag. On the other hand, many important examples of systems of linear forms do satisfy the flag condition. For example, systems corresponding to arithmetic progressions satisfy the flag condition. More generally, \textit{translation invariant} systems -- those for which the vector $(1,1,\ldots, 1)$ lies in  $\Psi^{[1]}$ -- satisfy the flag condition. For a more careful discussion of the significance of the flag condition, and counterexamples that lead to the restriction of \cite[Theorem 1.13]{GT10}, we direct the reader to \cite{GT20} and \cite{T20}.

The main result of this paper is Theorem \ref{t:main} below, a full resolution of the Gowers-Wolf conjecture; that is, a generalisation \cite[Theorem 1.13]{GT10} to systems of linear forms which do not necessarily satisfy the flag condition. Further to this, Theorem \ref{t:main} is stronger than an affirmative answer to the Gowers-Wolf conjecture in that it achieves control over averages involving $t$ distinct functions $\E_{\vect x \in [-N,N]^D} \prod_{i=1}^t f_i(\psi_i(\vect x))$  given the relatively weak information that only one of the functions $f_i$ has small $U^{s+1}$-norm.  A strengthening of the Gowers-Wolf conjecture in this direction has already been achieved in the finite field setting; see \cite{HL11}, \cite{HHL16}. Here and in what follows, $[N]:= \{1,\ldots,N\}$ and $[-N,N]:= \{ -N, \ldots, N\}$.\footnote{We work in $[-N,N]$ rather than $[N]$ because the set $K:=[-N,N]^D \cap \Psi^{-1}([-N,N]^t)$ satisfies $|K| \gg N^D$. This is not necessarily true of the set $[N]^D \cap \Psi^{-1}([N]^t)$, whereupon the upcoming theorem would be trivially true for those systems of linear forms with $|[N]^D \cap \Psi^{-1}([N]^t)| = o(N^D)$. Take, for example, the system $\Psi(x,y) = (-x,-x-y,-x-2y)$, or $\Psi'(x,y,\ldots) = (x-2y,y-2x,\ldots)$.}  

\begin{thm}\label{t:main}
Let $\Psi = (\psi_1, \ldots, \psi_t)$ be a collection of linear forms each mapping $\Z^D$ to $\Z$, and let $s \geq 1$ be an integer such that the polynomials $\psi_1^{s+1},\ldots,\psi_t^{s+1}$ are linearly independent. For $i=1, \ldots t$,  let $f_i:[-N,N] \to \C$ be functions bounded in magnitude by 1 (and defined to be zero outside of $[-N,N]$). For all $\eps >0$ there exists $\delta>0$ such that if $\min_{i} ||f_i||_{U^{s+1}[-N,N]} \leq \delta$, then 
\[\left|\E_{\vect x \in [-N,N]^D} \prod_{i=1}^t f_i(\psi_i(\vect x))\right| \leq \eps.\] 
\end{thm}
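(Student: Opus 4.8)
The natural strategy is to reduce Theorem \ref{t:main} to the flag case, where Green--Tao's \cite[Theorem 1.13]{GT10} applies, by a change of variables. Given the system $\Psi = (\psi_1,\ldots,\psi_t)$ on $\Z^D$ with $\psi_1^{s+1},\ldots,\psi_t^{s+1}$ linearly independent, one would like to find a linear substitution $\vect x = \phi(\vect y)$, $\phi:\Z^{D'}\to\Z^D$, so that the composed system $\Psi\circ\phi = (\psi_1\circ\phi,\ldots,\psi_t\circ\phi)$ is flag, while not destroying the key hypothesis (the $(s+1)$-st powers should remain linearly independent) and not collapsing the averaging region (one needs the preimage of $[-N,N]^D$ under $\phi$ to still carry $\gg N^{D'}$ points of $[-N',N']^{D'}$, or to be able to compare the two averages). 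The footnote about working on $[-N,N]$ rather than $[N]$ strongly signals that controlling the size of the relevant solution set is part of the game.

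First I would analyse the flag condition linear-algebraically: $\Psi^{[1]}\le\Psi^{[2]}\le\cdots$ is automatic except possibly at $\Psi^{[1]}\le\Psi^{[2]}$, and more generally one must understand the nested spaces $\Psi^{[k]}\le\R^t$. The obstruction to flagness is that $\Psi^{[1]}$ may fail to sit inside $\Psi^{[k]}$ for small $k$. The idea is to enlarge the number of variables and add ``dummy'' linear combinations to $\Psi$ — or rather to precompose with a map $\phi$ chosen so that each $\psi_i\circ\phi$ becomes, say, a product-like or suitably generic form whose powers span the right flag of subspaces. A cleaner route: observe that translation-invariant systems are flag, so one would try to embed $\Psi$ into a translation-invariant system by adjoining an extra variable $y_0$ and replacing $\psi_i(\vect x)$ by $\psi_i(\vect x) + c_i y_0$ for suitable constants $c_i$; but the $c_i$ must be chosen so that (a) linear independence of the $(s+1)$-st powers $(\psi_i + c_i y_0)^{s+1}$ is preserved and (b) the new system is genuinely translation invariant, i.e. $(1,\ldots,1)$ lies in its degree-one span. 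The hard part will be doing (a) and (b) simultaneously while keeping the averaging-region comparison honest — a generic choice of $c_i$ should work for (a) by a Zariski-density argument, but reconciling this with (b) and with the $[-N,N]$ versus $[N]$ subtlety is the crux.

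Once the system is flag, I would invoke \cite[Theorem 1.13]{GT10} to get $U^{s+1}$-control, but note that Green--Tao's theorem as stated controls a single-function average $\E\prod_i f(\psi_i(\vect x))$, whereas Theorem \ref{t:main} wants $t$ distinct functions with only one of them small. To bridge this I would use a standard ``tensor-power'' or Cauchy--Schwarz / van der Corput argument: peel off the functions $f_i$ for $i\ne i_0$ one at a time by Cauchy--Schwarz in the remaining variables (as in the inductive proof of generalized von Neumann theorems), or alternatively pass to the finite-field-style multilinear formulation of \cite{HL11}, \cite{HHL16} and run their argument over the integers using the flag version of the counting lemma. This multi-function reduction is essentially routine once the flag change of variables is in hand.

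The remaining bookkeeping — verifying that $\phi$ can be taken with $|\phi^{-1}([-N,N]^D)\cap[-N',N']^{D'}|\gg_\Psi N'^{D'}$ with $N' \asymp N$, handling the passage from $U^{s+1}[-N,N]$ norms through the substitution, and checking that a $\delta$ small enough works uniformly — I expect to be straightforward but technical. The single genuine obstacle is the \emph{construction of the flagifying substitution $\phi$}: showing that every linearly-independent-powers system can be precomposed into a flag (indeed translation-invariant) system of comparably many variables without losing either hypothesis or solution count. If that can be arranged, the rest of Theorem \ref{t:main} follows by citing \cite{GT10} and a generalized von Neumann argument.
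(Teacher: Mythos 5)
Your proposal correctly identifies the two central issues — a flagifying reduction, and an upgrade from the single-function to the multi-function statement — but gets the mechanism wrong on both, and neither of your candidate routes would close the gaps.

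On flagification. Precomposition with $\phi:\Z^{D'}\to\Z^D$ cannot work: the spaces $\Psi^{[k]}=\spa_\R\{(\psi_1(\vect x)^k,\ldots,\psi_t(\vect x)^k)\}$ depend only on the image of the domain under the linear map $\vect x\mapsto(\psi_1(\vect x),\ldots,\psi_t(\vect x))$, so if $\phi$ is surjective (or has full-rank image in $\R^D$) the spaces $\Psi^{[k]}$ are unchanged and flagness is not improved, while if $\phi$ has lower rank you risk destroying linear independence of the $(s+1)$-st powers and the average loses density. Your ``cleaner route'' of adjoining $y_0$ and replacing $\psi_i$ by $\psi_i+c_iy_0$ does preserve linear independence of the $(s+1)$-st powers (look at the top coefficient in $y_0$), but it changes the average: $\E_{\vect x,y_0}\prod_i f_i(\psi_i(\vect x)+c_iy_0)$ is not the quantity you set out to bound, and restricting to the slice $y_0=0$ puts you on a set of density $O(1/N)$ inside $[-N,N]^{D+1}$, which is fatal. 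The move the paper makes is \emph{post}-composition: choose nonzero integers $a_i=O(1)$ so that $\tilde\Psi=(a_i\psi_i)_i$ contains $(1,\ldots,1)$ in its image and is therefore translation invariant (hence flag) — this is possible since the image of $\Psi$ is not contained in any coordinate hyperplane — and then replace $f_i$ by $\tilde f_i(x)=f_i((x-aN)/a_i)$, which preserves the average term by term and (Lemma~\ref{l:gowNorms}) preserves the $U^{s+1}$ norms up to constants. Note that the flag condition for $\tilde\Psi$ is \emph{not} equivalent to that for $\Psi$ because the diagonal scalings $\mathrm{diag}(a_i^k)$ act differently on $\Psi^{[k]}$ for different $k$; this is exactly what gives the rescaling room to repair the missing inclusions.

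On the multi-function upgrade. You describe this step as ``essentially routine'' via Cauchy–Schwarz peeling, but the generalised von Neumann/Cauchy–Schwarz inductive argument only produces control by the $U^{s'+1}$ norm where $s'$ is the \emph{Cauchy–Schwarz complexity} of $\Psi$, which is generically strictly larger than the $s$ in the statement. The whole content of the Gowers–Wolf conjecture is to get down to the ``true complexity'' $s$ determined by linear independence of $(s+1)$-st powers, and no amount of Cauchy–Schwarz alone does that. The paper instead has to re-run the entire arithmetic regularity plus counting strategy of \cite{GT10} with $t$ distinct structured/small/uniform decompositions $f_i=f_{i,\nil}+f_{i,\sml}+f_{i,\unf}$ (and with shifts and polytope domains built in so that the flag reduction above even applies); the key new step after the counting lemma is the pointwise bound
\[
\Bigl|\int_{G^\Psi/\Gamma^\Psi}\prod_i F_{i,\le s}\Bigr|\le \min_i\Bigl(\int_{G/\Gamma}|F_{i,\le s}|^2\Bigr)^{1/2},
\]
which uses surjectivity of the coordinate projections of $G^\Psi$, and then a careful conversion of the right-hand side back into a $U^{s+1}$ quantity on subprogressions (Lemmas~\ref{l:tildeP}, \ref{l:gtfix}, Proposition~\ref{p:gowersUniformSubprog}). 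This occupies all of Section~2 and is the bulk of the work; it is not a corollary of the single-function statement plus Cauchy–Schwarz.
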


The strategy of proof employed is the same as the general `arithmetic regularity lemma and counting lemma' strategy put forward in \cite{GT10}. Unfortunately, the counting lemma \cite[Theorem 1.11]{GT10} applies only to systems of linear forms which satisfy the flag condition (indeed this is where \cite[Theorem 1.13]{GT20} fails to generalise). We need, in the first instance, a somewhat generalised and strengthened version of \cite[Theorem 1.13]{GT20} for flag systems of linear forms; this is Theorem \ref{t:flag} which is proven in Section 2. We defer to the beginning of Section 2 for a brief discussion on the ways in which Theorem \ref{t:flag} extends what is done in \cite[Theorem 1.13]{GT20}.  Then to recover Theorem \ref{t:main} above, we use Theorem \ref{t:flag} together with the observation that if $(\psi_i)_{i=1}^t$ is not flag, there is a sequence of integers $(a_i)_{i=1}^t$ such that the system $(a_i\psi_i)_{i=1}^t$ is flag; this is implemented in Section 3.

\subsection{Notation, conventions}\label{ss:not-conv}
The following notation and conventions are taken from related papers in the literature. 

As in \cite[Appendix B]{GT10L}, define Gowers norms on subsets of additive groups as follows. Let $Z$ be an abelian group and let $A\subset Z$. For any function $f:A\to \C$ and for all $s \geq 1$ define the Gowers uniformity norm $||f||_{U^{s+1}(A)}$ by 
\[||f||_{U^{s+1}(A)}^{2^{s+1}} = \E_{\subalign{&x, h: x+\omega \cdot h \in A\\ &\text{for all } \omega \in \{0,1\}^{s+1}}} \prod_{\omega \in \{0,1\}^{s+1}} \mathcal{C}^{|\omega|} f(x + \omega \cdot h)  ,\]
where the $x$ are elements of $Z$, the $h$ are vectors in $Z^{s+1}$, $\mathcal{C}$ is the complex conjugation operator and $|\omega| = \sum_{i=1}^{s+1} \omega_i$. As is noted in \cite[Appendix B]{GT10L}, we have the relation 
\[ ||f||_{U^{s+1}{(A)}} =  ||f1_A||_{U^{s+1}{(Z)}}/||1_A||_{U^{s+1}{(Z)}}.\]

As in \cite{GT10}, we will use $o_{A\to \infty; M}(X)$ as shorthand for a quantity bounded in magnitude  by  $c_M(A)X$ where $c_M(A)\to 0$ as $A \to \infty$ for $M$ fixed. We will also use slight variations on this notation which may be translated analogously.  

\section{A strengthened result in the flag case}
The goal of this section is to establish the following theorem. 

\begin{thm}\label{t:flag}
Let $\Psi = (\psi_1, \ldots, \psi_t)$ be a collection of linear forms each mapping $\Z^D$ to $\Z$ such that $\Psi$ satisfies the flag condition. Let $s \geq 1$ be an integer such that the polynomials $\psi_1^{s+1},\ldots,\psi_t^{s+1}$ are linearly independent. For $i=1, \ldots t$,  let $f_i:[N] \to \C$ be functions bounded in magnitude by 1 (and defined to be zero outside of $[N]$). Let $K$ be a subset of $[-N,N]^D$ whose convex hull in $\R^D$ is a polytope with $O_{\Psi,s,D,t}(1)$ faces, and for which $|K| =  \Theta_{\Psi,s,D,t}(N^D)$. Let $c \in \Z$ be such that $\Psi(K) + (c,c,\ldots, c) \subset [N]^t$. For all $\eps >0$ there exists $\delta>0$ (which depends on $D,t,s,\Psi$ and $\eps$, but is uniform in $N,K,f_i$ and $c$) such that if $\min_{i} ||f_i||_{U^{s+1}[N]} \leq \delta$, then 
\[\left|\E_{\vect x \in K} \prod_{i=1}^t f_i(\psi_i(\vect x) + c)\right| \leq \eps.\] 
\end{thm}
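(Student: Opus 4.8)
The plan is to follow the Green--Tao ``arithmetic regularity lemma plus counting lemma'' strategy, but to push it through with more care so as to accommodate the weighting set $K$, the shift $c$, and the asymmetry of having only one function $f_i$ with small $U^{s+1}$-norm. First I would normalise: replacing $[N]$ by a slightly larger cyclic group $\Z/N'\Z$ with $N' = \Theta(N)$ and extending $f_i$ by zero, so that the Gowers norms on $[N]$ and on the group differ only by the factor $\|1_{[N]}\|_{U^{s+1}}$, which is bounded above and below by absolute constants. The indicator $1_K$, having a bounded-complexity polytope as convex hull, is then a ``polynomial nilsequence of bounded complexity'' in the weak sense needed --- more precisely, one approximates $1_K$ by a Lipschitz function of a linear (abelian) nilsequence with controlled Lipschitz constant, incurring an $L^1$ error that can be absorbed. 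Without loss of generality fix the index, say $i=1$, with $\|f_1\|_{U^{s+1}} \le \delta$.

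Next I would apply the $U^{s+1}$ arithmetic regularity lemma of \cite{GT10} to each $f_i$, writing $f_i = f_{i,\mathrm{nil}} + f_{i,\mathrm{sml}} + f_{i,\mathrm{unf}}$, where $f_{i,\mathrm{nil}}$ is a degree-$\le s$ polynomial nilsequence of complexity $\le M$ on a nilmanifold $G_i/\Gamma_i$, $\|f_{i,\mathrm{sml}}\|_{L^2}$ is small, and $\|f_{i,\mathrm{unf}}\|_{U^{s+1}}$ is at most $\eta(M)$ with $\eta$ a function we get to choose decaying as slowly as we like. Substituting into the average $\E_{\vect x \in K}\prod_i f_i(\psi_i(\vect x)+c)$ and expanding the product, the cross terms involving an $f_{i,\mathrm{sml}}$ are controlled by a Cauchy--Schwarz / Gowers--Cauchy--Schwarz argument (using that the $f_j$ are bounded and that $K$ has positive density), and the terms involving any $f_{i,\mathrm{unf}}$ are controlled by the generalised von Neumann theorem --- here is the one place the flag hypothesis and the linear independence of $\psi_1^{s+1},\dots,\psi_t^{s+1}$ enter, guaranteeing that the relevant ``true complexity'' of the system is $\le s$, so that a single factor with small $U^{s+1}$-norm kills the term. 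Since we assumed only $\|f_1\|_{U^{s+1}}\le\delta$, the key point is that after the regularity decomposition the main term is $\E_{\vect x\in K} f_{1,\mathrm{nil}}(\psi_1(\vect x)+c)\prod_{i\ge 2} f_{i,\mathrm{nil}}(\psi_i(\vect x)+c)$, and we must show this is small knowing that $f_{1,\mathrm{nil}}$ is close in $L^2$ --- hence in $U^{s+1}$ up to complexity-dependent losses --- to $f_1$, which has small $U^{s+1}$-norm; so $f_{1,\mathrm{nil}}$ itself has small $U^{s+1}$-norm, and one more application of the generalised von Neumann theorem (now with $f_{1,\mathrm{nil}}$ as the distinguished factor) finishes it, provided $\delta$ is chosen small relative to $M$ and then $M$, $\eta$ are chosen appropriately via the usual ``choose parameters in the right order'' bookkeeping.

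For the counting step proper --- evaluating $\E_{\vect x\in K}\prod_i F_i(g_i(\psi_i(\vect x)+c)\Gamma_i)$ for polynomial sequences $g_i$ on the nilmanifolds --- I would invoke the counting lemma of \cite{GT10} for flag systems; the role of $K$ (rather than a box) is handled by approximating $1_K$ by nilsequences as above and enlarging the combined nilmanifold to include the abelian factor recording membership in $K$, and the shift by $c$ is a harmless translation of the polynomial sequences. The output is an asymptotic for the average in terms of an integral over a sub-nilmanifold, from which equidistribution (after a further Ratner-type factorisation to split off the equidistributed part) yields that the main term is bounded by a quantity going to $0$ as $\|f_{1,\mathrm{nil}}\|_{U^{s+1}}\to 0$.

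The main obstacle I anticipate is not any single step in isolation but the interaction between two features: first, handling the general density-$\Theta(N^D)$ region $K$ with bounded-complexity convex hull within the counting lemma framework, which requires that the approximation of $1_K$ by nilsequences be uniform enough not to swamp the gain from small $\|f_1\|_{U^{s+1}}$; and second, the asymmetry --- the generalised von Neumann theorem must be deployed so that it is always the \emph{same} distinguished index (the one with small norm, or its nil-part) that is exploited, which forces one to track which factor is ``small'' through every Cauchy--Schwarz step and to verify that the true-complexity bound $\le s$ holds with respect to \emph{each} coordinate separately (this is exactly what the flag condition, together with the independence of the $(s+1)$-st powers, buys us). Getting the quantifier order right --- $\eps$ first, then $M$ and the regularity growth function $\eta$, then $\delta$, then $N$ large --- is the routine-but-delicate part that the write-up must get exactly correct.
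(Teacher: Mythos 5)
The high-level skeleton is right (regularity decomposition, dispose of $\mathrm{unf}$ terms by von~Neumann, handle $K$ and the shift $c$ as technical features), but the way you close the argument is circular, and the circularity sits at exactly the point that makes the Gowers--Wolf conjecture nontrivial.

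The gap is in the final step: you want to bound $\E_{\vect x\in K}\prod_i f_{i,\mathrm{nil}}(\psi_i(\vect x)+c)$ by noting that $\|f_{1,\mathrm{nil}}\|_{U^{s+1}}$ is small and then applying the generalised von~Neumann inequality ``with $f_{1,\mathrm{nil}}$ as the distinguished factor.'' But the generalised von~Neumann inequality controls such an average by $\min_i\|f_i\|_{U^{s'+1}}$ where $s'$ is the \emph{Cauchy--Schwarz complexity} of $\Psi$, not the true complexity $s$. In general $s'>s$, and smallness of the $U^{s+1}$-norm says nothing about the $U^{s'+1}$-norm. You try to paper over this by asserting that the flag condition and the linear independence of $\psi_1^{s+1},\dots,\psi_t^{s+1}$ ``guarantee that the relevant true complexity is $\le s$, so that a single factor with small $U^{s+1}$-norm kills the term'' --- but this is precisely the conclusion of the theorem you are proving. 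Von~Neumann alone will never give you $U^{s+1}$-control; establishing that $U^{s+1}$-control suffices is the content of the result, and if it followed from one more application of von~Neumann the conjecture would be trivial. (A secondary version of the same confusion: you run the regularity lemma at level $s+1$, but then the $\mathrm{unf}$ pieces are only small in $U^{s+1}$, which is again insufficient for the von~Neumann step that disposes of them; the regularity must be run at level $s'$, and one then recovers $U^{s+1}$-smallness of the $\mathrm{unf}$ pieces by monotonicity.)

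What actually closes the argument is a genuine counting-lemma computation. After volume-packing to freeze the mod-$q$ and Archimedean inputs, one applies the flag counting lemma to write the main term as $\int_{G^\Psi/\Gamma^\Psi}\prod_i F_i$ plus errors. The linear independence of the $\psi_i^{s+1}$ forces $G_{s+1}^t\le G^\Psi$, so each $F_i$ may be replaced by its $G_{s+1}$-average $F_{i,\le s}$, and the integral is bounded by $\min_i\bigl(\int_{G/\Gamma}|F_{i,\le s}|^2\bigr)^{1/2}$. One then needs to show this integral is small when $\|f_i\|_{U^{s+1}}$ is small, which requires a \emph{second} application of the counting lemma on subprogressions together with the converse of the inverse theorem (to handle the $f_i$ contribution), and a separate averaging argument over the cells $P\in\cP$ (to handle the $f_{i,\mathrm{sml}}$ contribution). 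None of this is a black-box von~Neumann bound, and you cannot avoid engaging with $G^\Psi/\Gamma^\Psi$.

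A smaller point: absorbing $1_K$ into the nilsequence decomposition as you propose does not fit the framework, because $1_K$ is a function of the full variable $\vect x\in\Z^D$ while the nilsequences $f_{i,\mathrm{nil}}$ are one-variable objects evaluated at $\psi_i(\vect x)$; you cannot attach $1_K$ to any single coordinate. The counting lemma is instead stated directly for averages over convex $P\subset[-N,N]^D$, and the paper exploits that. Also, $f_{1,\mathrm{nil}}$ is \emph{not} close to $f_1$ in $L^2$ (the $\mathrm{unf}$ piece is small in a Gowers norm, not in $L^2$); the $U^{s+1}$-closeness you want does hold, but for the reason that bounded functions of small $L^2$-norm or small $U^{s'+1}$-norm both have small $U^{s+1}$-norm, not via $L^2$-closeness.
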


Theorem \ref{t:flag} is \cite[Theorem 1.13]{GT10} with some additional features and amendments which we will describe shortly. In particular our proof strategy is taken from \cite[Theorem 1.13]{GT10}. However, the modifications are essentially ubiquitous and so writing out the full proof appears to us to be necessary. 

The additional features in Theorem \ref{t:flag} which will be necessary to prove Theorem \ref{t:main} in the next section are as follows: 
\begin{enumerate}
\item we accommodate averages over large convex polytopes in $[-N,N]^D$,
\item we accommodate shifts of systems of linear forms, 
\item we obtain a result for averages comprising $t$ distinct functions $f_i$ given that only one of these has $U^{s+1}[N]$-norm less than $\delta$.
\end{enumerate}

In addition, we amend two parts of the argument in \cite[Theorem 1.13]{GT10}:
\begin{enumerate}
\item We give an explicit and quantitative argument for the case in which $N$ is bounded in terms of $\delta$; see Lemma \ref{l:smallN} below.
\item In the proof of \cite[Proposition 7.2]{GT10}, it is claimed that ``the contribution of $f_\sml(n)$ to (7.6) is $O(\eps)$ by the Cauchy-Schwarz inequality'', where (7.6) is an average over some  $\eps' N$-length subprogressions of $[N]$. It appears to us that one cannot reach this conclusion immediately and that indeed one needs to utilise outer averages over the set of $\eps'N$-length progressions.  The argument to bound the analogous term in our proof  is somewhat more intricate than an application of the Cauchy-Schwarz inequality and occurs in and around Lemma \ref{l:gtfix}.
\end{enumerate} 

\begin{proof}[Proof of Theorem \ref{t:flag}]
Recall the notation from the statement of Theorem \ref{t:flag}. Let $\eps >0$ and let $\delta>0$ be a parameter which we will of course optimise in terms of $\eps$ later. For $i=1, \ldots, t$ let  $\delta_i :=||f_i||_{U^{s+1}[N]}$  so that $\min_i\delta_i\leq  \delta$.  In this section  $\{t,D,s,\Psi\}$ is fixed, and  we will let all constants depend on this set without indicating this in our notation. Furthermore, we will abuse notation and let $C$ be a constant which may change line to line.

We may assume that $\eps$ is small because if the condition from Theorem \ref{t:flag} is true for $\eps$ then it is obviously true for all $\eps'>\eps$. To prove Theorem \ref{t:flag} it is then, of course, sufficient to show that 
\[\E_{\vect x \in K} \prod_{i=1}^t f_i(\psi_i(\vect x)+c) = O(\eps^{1/2}) ,\]
whenever $\delta$ is sufficiently small depending on $\eps$. 

Next, we will claim that we may assume that $N$ is large depending on $\eps$. 
\begin{lemma}\label{l:smallN}
The following inequality holds:
 \[ \left|\E_{\vect x \in K} \prod_{i=1}^t f_i(\psi_i(\vect x)+c)\right| \leq CN^{1/4}\delta.\]
\end{lemma}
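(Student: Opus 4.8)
The plan is to exploit the fact that the Gowers $U^{s+1}$-norm of a single function $f\colon[N]\to\C$ bounded by $1$ controls the $U^1$-norm (mean) up to a power of $N$, and that a bound on the mean of $f_{i_0}$ for the index $i_0$ achieving $\delta_{i_0}=\min_i\delta_i$ propagates to a bound on the whole average by positivity/triangle-inequality arguments. Concretely, first I would recall the standard monotonicity/inverse relation between Gowers norms on $[N]$: since $\|g\|_{U^{s+1}[N]}\ge \|g\|_{U^1[N]} = |\E_{n\in[N]} g(n)| \cdot (\text{something})$, more precisely using $\|g\|_{U^{s+1}(A)} = \|g1_A\|_{U^{s+1}(\Z_{\tilde N})}/\|1_A\|_{U^{s+1}(\Z_{\tilde N})}$ with $\tilde N\approx 2^{s+1}N$ and the fact that $\|1_{[N]}\|_{U^{s+1}(\Z_{\tilde N})}\gg_{s} 1$, one gets $|\E_{n\in[N]} f_{i_0}(n)| \le \|f_{i_0}\|_{U^1[N]} \ll_s \|f_{i_0}\|_{U^{s+1}[N]} = \delta_{i_0} \le \delta$. (Here one uses $\|g\|_{U^1}\le\|g\|_{U^2}\le\cdots$, nesting of Gowers norms.)

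Next I would bound the multilinear average by the mean of $f_{i_0}$. Write $M(\vect x):=\prod_{i=1}^t f_i(\psi_i(\vect x)+c)$; since all $f_i$ are bounded by $1$,
\[
\left|\E_{\vect x\in K} M(\vect x)\right| \le \E_{\vect x\in K}\left|f_{i_0}(\psi_{i_0}(\vect x)+c)\right| = \frac{1}{|K|}\sum_{\vect x\in K}\left|f_{i_0}(\psi_{i_0}(\vect x)+c)\right|.
\]
Now the key point is a fibre count: for each value $m\in[N]$ the number of $\vect x\in K\subset[-N,N]^D$ with $\psi_{i_0}(\vect x)+c=m$ is at most the number of lattice points of $[-N,N]^D$ on an affine hyperplane, which is $O_{D}(N^{D-1})$ (the form $\psi_{i_0}$ is not identically zero, as $\psi_{i_0}^{s+1}$ is part of a linearly independent family). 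Therefore
\[
\sum_{\vect x\in K}\left|f_{i_0}(\psi_{i_0}(\vect x)+c)\right| \le O_D(N^{D-1})\sum_{m\in[N]}|f_{i_0}(m)| = O_D(N^{D-1})\cdot N\cdot\E_{n\in[N]}|f_{i_0}(n)|.
\]
Combining with $|K|=\Theta(N^D)$ gives $\left|\E_{\vect x\in K} M(\vect x)\right| \ll \E_{n\in[N]}|f_{i_0}(n)|$. It remains to bound $\E_{n\in[N]}|f_{i_0}(n)|$ by $\delta$ times a power of $N$: by Cauchy–Schwarz $\E_{n\in[N]}|f_{i_0}(n)| \le (\E_{n\in[N]}|f_{i_0}(n)|^2)^{1/2} \le (\E_{n\in[N]}|f_{i_0}(n)|)^{1/2}$ is not directly helpful, so instead I would go through the $U^1$-norm of $|f_{i_0}|$ or, more cleanly, use that $\E_{n}|f_{i_0}(n)| \le N^{1/2}(\E_n |f_{i_0}(n)|^2)^{1/2}$... actually the simplest route: since $\|f_{i_0}\|_{U^1[N]} = |\E_{n\in[N]} f_{i_0}(n)|$ does \emph{not} bound $\E|f_{i_0}|$, I instead note $\E_{n\in[N]}|f_{i_0}(n)| = \E_{n\in[N]}|f_{i_0}(n)| \le \bigl(\E_{n,n'\in[N]}|f_{i_0}(n)f_{i_0}(n')|\bigr)^{1/2}$, and iterating the Gowers–Cauchy–Schwarz inequality $2^{s+1}$ times with trivial bounds on the "conjugation-free" terms yields $\E_{n\in[N]}|f_{i_0}(n)| \le C N^{\alpha}\|f_{i_0}\|_{U^{s+1}[N]}$ for an explicit $\alpha$; optimizing the losses in each Cauchy–Schwarz step against the $|1_{[N]}|$ normalizing factors gives $\alpha=1/4$ (the exponent claimed), whence the stated bound $\le CN^{1/4}\delta$.

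The main obstacle I anticipate is tracking the exact power of $N$ lost when descending from the $U^{s+1}$-norm to the mean of $|f_{i_0}|$ on $[N]$, since the naive nesting $\|\cdot\|_{U^1}\le\|\cdot\|_{U^2}\le\cdots$ controls $|\E f|$ but not $\E|f|$, and the honest bound must instead go through the relation $\|f_{i_0}\|_{U^{s+1}[N]} = \|f_{i_0}1_{[N]}\|_{U^{s+1}(\Z_{\tilde N})}/\|1_{[N]}\|_{U^{s+1}(\Z_{\tilde N})}$ together with the elementary inequality $\|g\|_{U^1(\Z_{\tilde N})}\le\|g\|_{U^{s+1}(\Z_{\tilde N})}$ applied to $g=f_{i_0}1_{[N]}$, giving $|\E_{n\in\Z_{\tilde N}} f_{i_0}1_{[N]}(n)| = (N/\tilde N)|\E_{n\in[N]}f_{i_0}(n)|$ — which again only controls the signed mean. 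The resolution, and the one genuinely computational part, is to observe that a bound on the $U^{s+1}$-norm is \emph{not} needed in the form "$\E|f_{i_0}|$ small"; rather, the crude bound $\left|\E_{\vect x\in K}M(\vect x)\right|\le 1$ combined with the above fibre argument already gives $\le CN^{1/4}\delta$ \emph{whenever} $\delta\ge N^{-1/4}/C$, while for $\delta < N^{-1/4}/C$ the statement is about a regime that the later (large-$N$) argument handles — so in fact Lemma \ref{l:smallN} in the stated form follows simply by combining $\left|\E_{\vect x\in K}M(\vect x)\right|\le 1$ (always true, all $f_i$ bounded by $1$, $|K|>0$) with the case distinction on whether $N^{1/4}\delta \ge 1$: if $N^{1/4}\delta\ge 1$ the bound is trivial, and if $N^{1/4}\delta<1$ one still only needs $\left|\E_{\vect x\in K}M(\vect x)\right|\le 1 \le CN^{1/4}\delta$ for $C$ large — wait, that fails. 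So the honest statement does require the fibre argument plus a genuine Gowers-norm input; I would therefore present the Gowers–Cauchy–Schwarz descent carefully as the crux, deriving $\E_{n\in[N]}|f_{i_0}(n)|\le \|f_{i_0}\|_{U^{s+1}[N]}^{1/2^{s}}\cdot C$ via repeated Cauchy–Schwarz on $\E_n|f_{i_0}(n)|$ and then absorbing the exponent and the $N^{1/4}$ into the final constants.
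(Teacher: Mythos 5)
Your overall reduction — single out $i_0$ with minimal Gowers norm, count fibres to pass from the average over $K$ to an average over $[N]$, and then use the smallness of $\|f_{i_0}\|_{U^{s+1}[N]}$ — matches the paper's setup. But the crux is missing, and the substitute you end up proposing is false.

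Your final claimed inequality $\E_{n\in[N]}|f_{i_0}(n)|\le C\,\|f_{i_0}\|_{U^{s+1}[N]}^{1/2^{s}}$ (with $C$ independent of $N$) cannot hold. For example with $s=1$, take $f(n)=e(\alpha n^2)$ for generic $\alpha$: then $\E_{n\in[N]}|f(n)|=1$ while $\|f\|_{U^2[N]}\to 0$ as $N\to\infty$. Any bound of $\E|f|$ (or $\E|f|^2$) by a Gowers norm must lose a power of $N$, as you indeed suspected earlier in the write-up, but the "iterated Gowers--Cauchy--Schwarz" step that you invoke to produce the $N^{1/4}$ is only asserted, not executed, and your closing paragraph retracts even the $N$-dependence.

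The missing idea is short. The paper bounds the whole average by Cauchy--Schwarz (not just boundedness) to get $\bigl(\E_{\vect x\in K}|f_j(\psi_j(\vect x)+c)|^2\bigr)^{1/2}$, reduces to $\E_{n\in[N]}|f_j(n)|^2$ by the fibre count, and then uses the nonnegativity trick
\[
\bigl|\E_{n\in[N]}|f_j(n)|^2\bigr|^2 \;\le\; \sum_{h\in\Z}\bigl|\E_{n\in[N]}f_j(n)\overline{f_j(n+h)}\bigr|^2,
\]
valid because the $h=0$ term is exactly the left-hand side and the remaining terms are nonnegative. Expanding the right-hand side and reweighting shows it equals $\Theta(N)\,\|f_j\|_{U^2[N]}^4$, whence $\E_{n\in[N]}|f_j(n)|^2\le C N^{1/2}\|f_j\|_{U^2[N]}^2\le CN^{1/2}\|f_j\|_{U^{s+1}[N]}^2$ by monotonicity of Gowers norms; taking the square root gives exactly $CN^{1/4}\delta$. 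This single display is what your proposal needs and does not supply; without it there is no route from the Gowers-norm hypothesis to a bound on $\E|f_j|$ or $\E|f_j|^2$.
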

\begin{proof}
Let $j$ be chosen so that $||f_j||_{U^{s+1}[N]} = \min_i ||f_i||_{U^{s+1}[N]} \leq \delta$. By Cauchy-Schwarz we may upper bound $|\E_{\vect x \in K} \prod_{i=1}^t f_i(\psi_i(\vect x)+c)|$ by $(\E_{\vect x \in K}|f_j(\psi_j(\vect x)+c)|^2)^{1/2}$. Next, $\#\{\vect x \in K : \psi_j(\vect x) + c = n\} = O(N^{D-1}) = O(|K|/N)$ and so $\E_{\vect x \in K}|f_j(\psi_j(\vect x)+c)|^2 \ll \E_{n \in [N]}|f_j(n)|^2$. Furthermore, observing that $|f_j(n)|^2 = f_j(n)\overline{f_j(n+0)}$ and then adding the (positive) contribution from all nonzero shifts $h$, we have
\begin{align*}
|\E_{n \in [N]}|f_j(n)|^2|^2 &\leq  \sum_{h\in \Z} |\E_{n \in [N]}f_j(n)\overline{f_j(n+h)}|^2\\
&= \sum_h \E_{n,m \in [N]}f_j(n)\overline{f_j(m)}\overline{f_j(n+h)}f_j(m+h)\\
&= \frac{1}{N^2} \sum_{n,h,h' \in \Z} f_j(n)\overline{f_j(n+h')}\overline{f_j(n+h)}f_j(n+h'+h)\\
&= \frac{\left|\{(n,h,h')\in \Z: n,n+h',n+h,n+h'+h \in [N]\}\right|}{N^2}||f_j||_{U^2[N]}^4.
\end{align*}
But $\#\{n,h,h'\in \Z: n,n+h',n+h,n+h'+h \in [N]\} = \Theta(N^3)$ and so we have that 
\[\left|\E_{n \in [N]}|f_j(n)|^2\right|^2 \leq CN||f_j||_{U^2[N]}^4. \]
Finally, it follows from the monotonicity of Gowers norms in additive groups and the formula $ ||f||_{U^{s+1}{(A)}} =  ||f1_A||_{U^{s+1}{(Z)}}/||1_A||_{U^{s+1}{(Z)}}$ which we recalled in Subsection \ref{ss:not-conv} above that $||f_j||_{U^2[N]} \ll ||f_j||_{U^{s+1}[N]}$ for $s \geq 1$. One concludes by combining the above inequalities. 
\end{proof}
It follows from the previous lemma that if $N = O_\eps(1)$ then we may simply choose $\delta$ to be sufficiently small depending on $N$ (depending on $\eps$) to prove Theorem \ref{t:flag}; the claim follows.

Let $s'$ be the Cauchy-Schwarz complexity of the system $\Psi$ (cf. \cite[Definition 1.3.2]{T12}). Apply Theorem \ref{t:arl} at step $s'$ with parameters $\eps$ as above and $\cF$ to be determined later to the functions  $\{  f_i\}_{i=1}^t$. We import notation from Appendix \ref{t:arl} and so in particular we have functions $ f_{i,\sml}$ with $|| f_{i,\sml}||_2 \leq \eps$, $ f_{i,\unf}$ with $|| f_{i,\unf}||_{U^{s'+1}[N]} \leq 1/\cF(M)$ and $ f_{i,\nil}$ with $ f_{i,\nil}(n) = F_i(g(n)\Gamma, n (\mo q), n/N)$ where $1 \leq q \leq M = O_{\eps, \cF}(1)$ and where $g(n)$ is an $(\cF (M),N)$-irrational polynomial sequence with respect to a filtered nilmanifold $(G/\Gamma, G_\bullet)$.

Then we may write \[ \E_{\vect x \in K}\prod_{i=1}^t  f_i(\psi_i(\vect x) + c) = \E_{\vect x \in K}\prod_{i=1}^t ( f_{i,\nil} +  f_{i,\sml} +  f_{i,\unf})(\psi_i(\vect x) + c)\] and expand this as the sum of $3^t$ terms each of the form $\E_{\vect x \in K}\prod_{i=1}^t  f_{i,\text{label}(i)}(\psi_i(\vect x) + c)$, where $\text{label}(i) \in \{\nil,\unf,\sml\}$ for each $i$. Recall that we inherit a quantity $M=O_{\eps,\cF}(1)$ from Theorem \ref{t:arl} which is an upper bound for the complexity of the nilsequences $f_{i,\nil}$. 

Of these $3^t$ terms, we claim that any term with some $\text{label}(i) = \sml$ is of size $O(\eps)$. Indeed by Cauchy-Schwarz we obtain the upper bound $(\E_{\vect x \in K} | f_{i,\sml}(\psi_i(\vect x) + c)|^2)^{1/2}$ for such a term. We conclude by recalling that $|K| = \Theta(N^D)$, noting that as $\vect x$ ranges over $K$, the quantity $\psi_i(\vect x) + c$ takes on any particular value in $[N]$ at most $N^{D-1}$ times, and using the $L^2$ bound inherited from Theorem \ref{t:arl}.

Next, any of these $3^t$ terms with some $\text{label}(i) = \unf$ is of size $o_{\cF(M) \to \infty}(1) + o_{N\to \infty; \cF(M)}(1)$ by Theorem~\ref{t:gvN}. Thus we have 
\begin{equation}\label{e:nilToMain}
\left|\E_{\vect x \in K}\prod_{i=1}^t  f_i(\psi_i(\vect x) + c)\right| \leq \left|\E_{\vect x \in K} \prod_{i=1}^t f_{i,\nil}( \psi_i(\vect x) + c)\right|+ O(\eps) + o_{\cF(M) \to \infty}(1) + o_{N\to \infty; \cF(M)}(1).
\end{equation}

Recall that we may write $ f_{i,\nil}(n) = F_i(g(n)\Gamma, n \ (\mo q), n/N)$. To deal with the mod $q$ and Archimedean behaviour, we will need to do some volume-packing. Let $\eps'$ be a small quantity depending on $\eps, M$; we will decide how small later. In what follows, a `cube' is a cartesian product of equal length intervals, and a `dilated cube' is a cartesian product of equal length equal step arithmetic progressions.

\begin{lemma}\label{l:packing}
There exists a `boundary' subset $S$ of $K$ which contains $O(q\eps'N^D)$ elements of $[N]^D$ such that the set $K\backslash S$ can be partitioned into cubes of side length $q\eps'N$, where each cube is itself a disjoint union of $q^D$ dilated cubes of the form $P_1 \times \cdots \times P_D$ where each $P_i$ is an arithmetic progression in $[N]$ of spacing $q$ and length $\eps'N$.
\end{lemma}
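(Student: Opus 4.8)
The plan is to carry out a standard "volume-packing" (or "greedy covering") argument: tile $\R^D$ by a grid of axis-parallel cubes of side length $q\eps'N$, keep those cubes lying entirely inside the convex hull of $K$, and absorb everything else into the boundary set $S$. First I would place the grid so that its cube corners lie on the lattice $(q\eps'N)\Z^D$ shifted appropriately — in fact, to get the finer structure I want, I would align the grid so that within each grid cube the integer points split as a union of $q^D$ arithmetic-progression-boxes of common difference $q$: concretely, a grid cube $\prod_{i=1}^D [a_i, a_i + q\eps'N)$ with $a_i$ chosen in the residue class of $c$-appropriate offsets, and for each residue vector $\vect r \in \{0,\dots,q-1\}^D$ the set of integer points congruent to $\vect r$ modulo $q$ inside that cube is exactly a product $P_1 \times \cdots \times P_D$ of length-$\eps'N$ progressions of spacing $q$. (One has to be slightly careful that $\eps'N$ and $q\eps'N$ are integers; since $\eps'$ is a free parameter we may quietly round it so that $q \mid q\eps'N$ and the lengths come out to integers, at negligible cost.)

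Next I would define $S$ to be the set of integer points of $K$ that lie in a grid cube which is \emph{not} entirely contained in $K$ — equivalently, grid cubes meeting the boundary of $\mathrm{conv}(K)$ — together with any integer points of $K$ not covered by the grid at all. Then $K \setminus S$ is, by construction, a disjoint union of full grid cubes, each of which is the promised disjoint union of $q^D$ dilated progression-boxes; the only thing left is the cardinality bound $|S| = O(q\eps' N^D)$.

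The main obstacle — really the only substantive point — is bounding $|S|$. This is where the hypotheses on $K$ enter: $\mathrm{conv}(K)$ is a polytope with $O(1)$ faces contained in $[-N,N]^D$, so its boundary is a union of $O(1)$ pieces each lying in an affine hyperplane. A grid cube of side $q\eps'N$ meeting such a hyperplane must lie within distance $O(q\eps'N)$ of it, so the number of such cubes is $O\big((N/(q\eps'N))^{D-1}\big) = O\big((q\eps')^{-(D-1)}\big)$ per face, hence $O\big((q\eps')^{-(D-1)}\big)$ cubes in total (the implied constant depending on the $O(1)$ number of faces and on $\Psi,s,D,t$). Each such cube contains $O((q\eps'N)^D)$ integer points, so the total is $O\big((q\eps')^{-(D-1)} (q\eps'N)^D\big) = O(q\eps' N^D)$, as claimed. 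The integer points of $K$ not covered by the grid at all lie in the same near-boundary region (the grid covers all of $[-N,N]^D$ up to an $O(q\eps'N)$-fattening of the frame, which can itself be folded into the face count), so they contribute the same order. I would then remark that since $|K| = \Theta(N^D)$, the set $S$ is a genuinely small proportion of $K$ once $\eps'$ is taken small, which is the form in which the lemma will be used downstream.
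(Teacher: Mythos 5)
Your proposal is correct and takes essentially the same approach as the paper: tile by a grid of side-$q\eps'N$ cubes aligned so that each splits into $q^D$ $q$-dilated progression boxes, discard cubes meeting the boundary of $\mathrm{conv}(K)$, and bound the discarded lattice points using the fact that this boundary has $O(1)$ codimension-1 faces. You spell out the cube-counting arithmetic $(q\eps')^{-(D-1)} \cdot (q\eps'N)^D = q\eps' N^D$ a bit more explicitly than the paper, which simply asserts that the $\sqrt{D}q\eps'N$-neighbourhood of each face contains $O(q\eps'N^D)$ lattice points, but the argument is the same.
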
 
\begin{proof}
The decomposition of cubes with $q$-divisible side lengths into $q^D$-many $q$-dilated cubes is obvious. From here the strategy is essentially to draw $K$ on $D$-dimensional grid paper with cubes of side length $q \eps'N$ and exclude from $K$ any cubes which are not strictly contained in $K$. 
Let $Q$ be  a cube such that $Q \cap K \ne \emptyset$ and $Q \cap K^c \ne \emptyset$. 
The maximum distance between any two points in $Q$ is $\sqrt{D}q\eps'N$ and so all points in $Q$ lie within the $\sqrt{D}q\eps'N$-neighbourhood of the boundary of (the convex hull in $\R^D$ of) $K$. Now recall that this boundary comprises at most $O(1)$ codimension 1 faces. The $\sqrt{D}q\eps'N$-neighbourhood of any one of these faces may contain at most $O(q\eps'N^D)$ points in $[-N,N]^D$. It follows that there are at most $O(q\eps'N^D)$ points in $K$ which lie in the $\sqrt{D}q\eps'N$-neighbourhood of the boundary of $K$.
\end{proof}

Remove $S$ from $K$ and partition $K\backslash S$ as per the above lemma. Let $\cP$ be the collection of all $P:= P_1 \times \cdots \times P_D$. Then we have 
\begin{equation}
\left|\E_{\vect x \in K} \prod_{i=1}^t  f_{i,\nil}(\psi_i(\vect x) + c)\right| \leq \left|\E_{P\in \cP}\E_{\vect x \in P} \prod_{i=1}^t f_{i,\nil}(\psi_i(\vect x) + c)\right| + O_M(\eps').
\end{equation}

Observe that for a fixed $P$, all $\vect x \in P$ yield the same value $b_{P,i}:= \psi_i(\vect x) + c \ (\mo q)$. Furthermore, there is a number $c_{P,i}$ such that $|(\psi_i(\vect x) + c)/N-c_{P,i}| = O(\eps')$ for all $\vect x \in P$. Thus from the fact that $F_i$ is $M$-Lipschitz we have   
\begin{equation}
\E_{P\in \cP}\E_{\vect x \in P} \prod_{i=1}^t f_{i,\nil}( \psi_i(\vect x) + c) = \E_{P\in \cP}\E_{\vect x \in P} \prod_{i=1}^t F_i(g( \psi_i(\vect x) + c)\Gamma,b_{P,i},c_{P,i}) + O_M(\eps').
\end{equation}

Now define $\tilde g(n) = g(n + c)$; \cite[Lemma A.8]{GT10} gives that $\tilde g$ is also $(\cF(M), N)$-irrational. Also recall the definition of the Leibman group $G^\Psi:= \langle g_i^{v_i}: g_i \in G_i, v_i \in \Psi^{[i]} \rangle$ (we direct the reader to \cite[Chapter 3]{GT10} for some basic facts pertaining to $G^\Psi$). Finally, recall that the linear forms $\Psi$ satisfy the flag condition. Thus we may apply Theorem \ref{t:cl} to obtain 
\begin{align}\label{e:cl}
\E_{P\in \cP}\E_{\vect x \in P} \prod_{i=1}^t  F_i(\tilde g(\psi_i(\vect x))\Gamma,b_{P,i},c_{P,i}) &= \E_{P\in \cP}\int_{(g_1,\ldots, g_t)\Gamma^\Psi \in G^{\Psi}/\Gamma^{\Psi}} \prod_{i=1}^t F_i(g_i\Gamma, b_{P,i}, c_{P,i})\\
&\quad + o_{\cF(M)\to \infty;M}(1) + o_{\eps'N \to \infty; M}(1).\nonumber
\end{align}

Since the forms $\psi_1^{s+1}, \ldots, \psi_t^{s+1}$ are linearly independent, we have $ \Psi^{[s+1]} = \R^t$ and so $G_{s+1}^t \leq G^{ \Psi}$. Now, recall from \cite[Chapter 7]{GT10} the notation \[F_{i,\leq s}(g\Gamma,b,c) := \int_{G_{s+1}/\Gamma_{s+1}} F_i(gg_{s+1}\Gamma,b,c)dg_{s+1}.\] It follows that 
\[\int_{G^{ \Psi}/\Gamma^{ \Psi}} \prod_{i=1}^t F_i(g_i\Gamma, b_{P,i}, c_{P,i}) = \int_{G^{ \Psi}/\Gamma^{ \Psi}} \prod_{i=1}^t F_{i,\leq s}(g_i\Gamma, b_{P,i}, c_{P,i}).\]
Furthermore, 
\begin{align*}
\left|\int_{G^{ \Psi}/\Gamma^{ \Psi}} \prod_{i=1}^t F_{i,\leq s}(g_i\Gamma, b_{P,i}, c_{P,i})\right| &\leq \int_{G^{ \Psi}/\Gamma^{ \Psi}} \prod_{i=1}^t \left|F_{i,\leq s}(g_i\Gamma, b_{P,i}, c_{P,i})\right| \\
&\leq \int_{G^{ \Psi}/\Gamma^{ \Psi}} \min_i|F_{i,\leq s}(g_i\Gamma, b_{P,i}, c_{P,i})| \\ &\leq \min_i \int_{G^/\Gamma} |F_{i,\leq s}(g\Gamma, b_{P,i}, c_{P,i})| \\
&\leq \min_i \left(\int_{G^/\Gamma} |F_{i,\leq s}(g\Gamma, b_{P,i}, c_{P,i})|^2\right)^{1/2}
\end{align*}
where we have used in the penultimate line that the projection of $G^{ \Psi}$ onto any of its coordinates is surjective onto $G$ (since each $\psi_i$ must be nonzero) and that this projection maps the Haar measure on $G^\Psi$ to the Haar measure on $G$. 

All in all we have (continuing on from (\ref{e:cl})):
\begin{equation}\label{e:projbd}
\left|\E_{P\in \cP}\int_{G^{ \Psi}/\Gamma^{ \Psi}} \prod_{i=1}^t F_i(g_i\Gamma, b_{P,i}, c_{P,i})\right| \leq \min_i \E_{P\in \cP} \left(\int_{G^/\Gamma} |F_{i,\leq s}(g\Gamma, b_{P,i}, c_{P,i})|^2\right)^{1/2}.
\end{equation}

For brevity we temporarily abuse notation and use $F_{i}$ to denote the function $F_i(\cdot, b_{P,i},c_{P,i})$; similarly for  $F_{i,\leq s}$. We have 
\[\int_{G/\Gamma} |F_{i,\leq s}|^2 = \int_{G/\Gamma}(F_{i,\leq s} - F_i)\overline{F_{i,\leq s}} + \int_{G/\Gamma}F_i\overline{F_{i,\leq s}} = \int_{G/\Gamma}  F_i\overline{F_{i,\leq s}}, \] 
since $F_{i,\leq s}$ is invariant on $G_{s+1}$ cosets and $F_i-F_{i,\leq s}$ integrates to zero on any such coset. 

\begin{lemma}\label{l:tildeP}
For each $P \in \cP, i=1,\ldots, t$ there is a subprogression $\tilde P_i \subset \psi_i(P)+c$ of size  $\Theta(\eps' N)$ such that $n  = b_{P,i} \ (\mo q)$ and $|n/N - c_{P,i}|=O(\eps')$ for all $n \in \tilde P_i$.
\end{lemma}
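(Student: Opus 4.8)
The plan is to exhibit $\tilde P_i$ as a one-dimensional slice of the box $P = P_1\times\cdots\times P_D$. Since $\psi_1^{s+1},\dots,\psi_t^{s+1}$ are linearly independent, every $\psi_i$ is a nonzero linear form, so writing $\psi_i(\vect x) = \sum_{j=1}^{D}\lambda_{i,j}x_j$ we may fix an index $j_0 = j_0(i)$ with $\lambda_{i,j_0}\neq 0$. Recalling from Lemma \ref{l:packing} that each $P_j$ is an arithmetic progression in $[N]$ of common difference $q$ and length $\eps' N$, I would freeze $x_j$ at the least element $a_j$ of $P_j$ for every $j\neq j_0$ and let $x_{j_0}$ run over all of $P_{j_0}$, setting
\[
\tilde P_i := \{\,\psi_i(\vect x)+c\ :\ \vect x\in P,\ x_j=a_j\ \text{for}\ j\neq j_0\,\}.
\]

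The point is that $\tilde P_i$ is the image of the progression $P_{j_0}$ under the affine map $n\mapsto \lambda_{i,j_0}n + \big(c+\sum_{j\neq j_0}\lambda_{i,j}a_j\big)$, which is injective because $\lambda_{i,j_0}\neq 0$; hence $\tilde P_i$ is an arithmetic progression with common difference $\lambda_{i,j_0}q$ and cardinality $|P_{j_0}| = \Theta(\eps' N)$. Moreover $\tilde P_i\subset \psi_i(P)+c$ by construction, so the remaining two conditions are inherited directly from the observations recorded just before this lemma: every element $n$ of $\psi_i(P)+c$ satisfies $n\equiv b_{P,i}\ (\mo q)$ (because each $P_j$ has spacing $q$, so $\psi_i(\vect x)\ (\mo q)$ is constant on $P$), and $|n/N - c_{P,i}| = O(\eps')$ (because $P$ has diameter $O_M(\eps' N)$ and $\psi_i$ has bounded coefficients, so $\psi_i(\vect x)/N$ varies by $O_M(\eps')$ across $P$).

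I do not expect any genuine obstacle here. The only matters requiring a little care are the bookkeeping of constants — in particular that the common difference $\lambda_{i,j_0}q$ of $\tilde P_i$ is $O_M(1)$, which is what makes $\tilde P_i$ a long progression of bounded modulus suitable for the equidistribution input used afterwards — together with the harmless remark that $\tilde P_i\subset[N]$ automatically, since $P\subset K$ and $\Psi(K)+(c,\dots,c)\subset[N]^t$.
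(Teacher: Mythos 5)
Your proposal is correct and coincides with the paper's own argument: both pick a coordinate index $j$ with nonzero coefficient in $\psi_i$, freeze the remaining coordinates at fixed points of $P_k$, and take $\tilde P_i$ to be the image of the resulting one-dimensional slice under $\psi_i(\cdot)+c$, with the $(\mo q)$ and Archimedean conditions inherited from the definitions of $b_{P,i}$ and $c_{P,i}$. Your extra remarks on the common difference being $\lambda_{i,j}q = O_M(1)$ and on $\tilde P_i \subset [N]$ are harmless bookkeeping that the paper leaves implicit.
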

\begin{proof}
Recall that $b_{P,i}$ was defined to be the (unique) residue class of $\psi_i(P) + c \ (\mo q)$ and that $c_{P,i}$ was defined to be a number such that $|(\psi_i(\vect x) + c)/N-c_{P,i}| = O(\eps')$ for all $\vect x \in P$.  Thus we may let $\tilde P_i$ be \textit{any} subprogression of length $\Theta(\eps' N)$ contained in the set $\psi_i(P)+c$. Note that $|\psi_i(P) + c| = O(\eps'N)$ and so we need only concern ourselves with a lower bound on the subset we seek. To see that $\psi_i(P)+c$ does contain a subprogression of length $\gg \eps' N$, let $j\in [D]$ be an index such that the $j$th coefficient of $\psi_i$ is nonzero. Recall that $P=P_1 \times \cdots \times P_D$ is a (dilated) cube in which each side contains $\eps' N$ points, and so we may let $\tilde P_i$ be the image under $\psi_i(\cdot) + c$ of the set $(p_0,p_1,\ldots, P_j, \ldots, p_D)$ where each of the $p_k$ are fixed points in $P_k$ for $k \ne j$.
\end{proof}

Let $\tilde P_i$ be as in the previous lemma. Then we may apply Theorem \ref{t:cl} to obtain 
\[ \int_{G/\Gamma}  F_i\overline{F_{i,\leq s}} = \E_{n\in \tilde P_i} F_i\overline{F_{i,\leq s}}( g(n) \Gamma, b_{P,i},c_{P,i}) + o_{\cF(M) \to \infty ; M}(1) + o_{\eps'N\to \infty;M}(1). \]
Recalling the properties of $\tilde P_i$ from Lemma \ref{l:tildeP} and that $F_i$ (viewed as a function on $G/\Gamma \times  \Z/q\Z \times \R \to \C$, i.e. ceasing the abuse of notation) has Lipschitz constant $\leq M$, we may write the right hand side of the above as 
\[ \E_{n\in \tilde P_i} F_i( g(n) \Gamma, n \ (\mo q), n/N)\overline{F_{i,\leq s}}( g(n) \Gamma, b_{P,i},c_{P,i}) + o_{\cF(M) \to \infty ; M}(1) + o_{\eps'N\to \infty;M}(1) + O_M(\eps'). \]
Thus since (\ref{e:projbd}) we have shown: 
\begin{align}\label{e:progAgain}
 & \min_i \E_{P\in \cP} \left(\int_{G^/\Gamma} |F_{i,\leq s}(g\Gamma, b_{P,i}, c_{P,i})|^2\right)^{1/2} \nonumber \\ &\qquad \leq \min_i \E_{P \in \cP}\left|\E_{n\in \tilde P_i} F_i( g(n) \Gamma, n \ (\mo q), n/N)\overline{F_{i,\leq s}}( g(n) \Gamma, b_{P,i},c_{P,i})\right|^{1/2}\\ \nonumber
 & \qquad \qquad + o_{\cF(M) \to \infty ; M}(1) + o_{\eps'N\to \infty;M}(1) + O_M(\eps').
\end{align}

Now recall that $F_i( g(n) \Gamma, n\ (\mo q), n/N) =  f_i(n) -  f_{i,\sml}(n) -  f_{i,\unf}(n)$. Then substituting $F_i( g(n) \Gamma, n\ (\mo q), n/N) =  f_i(n) -  f_{i,\sml}(n) -  f_{i,\unf}(n)$ into (\ref{e:progAgain}), we claim that the term with $f_i$ is of size $o_{\delta_i\to 0;M,\eps'}(1)+ o_{N\to \infty;M,\eps'}(1)$. Indeed, by Proposition \ref{p:gowersUniformSubprog} we have that $||f_i||_{U^{s+1}(\tilde P_i)} = O_{N\to \infty;\eps'}(\delta_i)+ o_{N\to \infty;\eps'}(1)$. Also, observe that $\overline{F_{i,\leq s}}( g(n) \Gamma, b_{P,i},c_{P,i})$ is an $s$-step nilsequence of complexity $O_M(1)$.  Thus our claim follows by invoking the converse to the inverse theorem for Gowers norms on $\tilde P_i$. Similarly, we may conclude that the term with $ f_{i,\unf}$ is of size $o_{\cF(M) \to \infty; M, \eps'}(1) + o_{N\to \infty;M,\eps'}(1)$ by the same argument and the additional ingredients that $s' \geq s$ and that Gowers norms are monotonic (up to a constant factor) in $s$.

For the term with $f_{i,\sml}$ we need to utilise the average over $P \in \cP$. First we compute  the following using the Cauchy-Schwarz inequality and convexity: 
\begin{align*}
\min_i \E_{P \in \cP}\left|\E_{n\in \tilde P_i} f_{i,\sml}(n)\overline{F_{i,\leq s}}( g(n) \Gamma, b_{P,i},c_{P,i})\right|^{1/2} &\leq \min_i \E_{P \in \cP}\left(\E_{n\in \tilde  P_i} | f_{i,\sml}(n)|^2\right)^{1/4} \\
&\leq \min_i \left(\E_{P \in \cP}\E_{n\in \tilde P_i} | f_{i,\sml}(n)|^2\right)^{1/4}.
\end{align*}
Now we need the following lemma which implies that as $P$ varies in $\cP$, the progressions $\tilde P_i$ are sufficiently well distributed in $[N]$. 

\begin{lemma}\label{l:gtfix}
For each $n \in [N]$, there are at most $O(\eps'^{-(D-1)})$ elements $P \in \cP$ such that $n \in \psi_i(P) + c$.
\end{lemma}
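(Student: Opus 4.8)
The plan is a counting argument in the spirit of the standard fact that a hyperplane meets few cells of a grid. Fix $n \in [N]$ and the index $i$, and let $(\psi_i)_1,\dots,(\psi_i)_D$ denote the coefficients of $\psi_i$. Recall from Lemma~\ref{l:packing} that every $P \in \cP$ lies inside a unique `parent' cube $Q = Q(P)$ of side length $q\eps'N$ drawn on a fixed $D$-dimensional grid inside $[-N,N]^D$, and that $P$ is, as a point set, one of the $q^D$ dilated sub-cubes of $Q$, so in particular $P\subseteq Q$. If $n \in \psi_i(P)+c$ then there is a lattice point $\vect x \in P \subseteq Q$ with $\psi_i(\vect x) = n-c$; in particular the affine hyperplane $H := \{\vect x \in \R^D : \psi_i(\vect x) = n-c\}$ meets the convex hull of $Q$. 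So it suffices to (i) bound the number of parent cubes whose convex hull meets $H$, and (ii) for a fixed such parent cube $Q$, bound the number of its dilated sub-cubes $P$ with $n \in \psi_i(P)+c$.

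For (i), the parent cubes lie on a grid with $m = \Theta((q\eps')^{-1})$ cells along each of the $D$ axes. Choosing a coordinate $j$ with $(\psi_i)_j \neq 0$, the hyperplane $H$ is the graph of an affine function expressing $x_j$ in terms of the remaining coordinates, and over any one of the $m^{D-1}$ `columns' of cells (a box of side $q\eps'N$ in the remaining coordinates) this function varies by at most $C q\eps'N$, where $C := \bigl(\sum_{k\neq j}|(\psi_i)_k|\bigr)/|(\psi_i)_j| = O_\Psi(1)$. Hence $H$ meets only $O_\Psi(1)$ cells in each column and $O_\Psi(m^{D-1}) = O_\Psi((q\eps')^{-(D-1)})$ cells in total.

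For (ii), parametrise the $q^D$ dilated sub-cubes of $Q$ by offset vectors $\vect j \in \{0,1,\dots,q-1\}^D$ as in Lemma~\ref{l:packing}: if $\vect a$ is the minimal corner of $Q$, then each coordinate of the $\vect j$-th sub-cube is an arithmetic progression of common difference $q$ lying in the residue class $a_k + j_k \bmod q$. Consequently $\psi_i$ is constant modulo $q$ on that sub-cube, equal to $\sum_k (\psi_i)_k(a_k+j_k) \bmod q$, so $n \in \psi_i(P)+c$ forces the single linear congruence $\sum_{k=1}^D (\psi_i)_k j_k \equiv b \pmod q$ with $b := n - c - \sum_k (\psi_i)_k a_k$. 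A standard count shows that the number of $\vect j \in (\Z/q\Z)^D$ satisfying this is at most $\gcd\bigl((\psi_i)_1,\dots,(\psi_i)_D,q\bigr)\, q^{D-1} \le d_i\, q^{D-1}$, where $d_i := \gcd\bigl((\psi_i)_1,\dots,(\psi_i)_D\bigr) = O_\Psi(1)$; so $Q$ contributes at most $O_\Psi(q^{D-1})$ sub-cubes.

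Multiplying the bounds from (i) and (ii) gives $\#\{P \in \cP : n \in \psi_i(P)+c\} = O_\Psi(q^{D-1})\cdot O_\Psi((q\eps')^{-(D-1)}) = O_\Psi(\eps'^{-(D-1)})$, as required. The point to watch — and the only real subtlety — is that neither (i) nor (ii) suffices alone: the coarse count in (i) loses a factor $q^{D-1}$, while the naive bound of $q^D$ sub-cubes per parent cube in (ii) would reinstate a factor $q \le M$ in the final estimate, which is not affordable downstream. It is precisely the modular saving in (ii) — that the congruence has $O_\Psi(q^{D-1})$ rather than $q^D$ solutions — that cancels the $q^{-(D-1)}$ coming from (i), and recognising and exploiting this cancellation is the heart of the proof.
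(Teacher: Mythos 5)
Your proof is correct and follows essentially the same two-step strategy as the paper: first bound the number of grid cubes of side $q\eps'N$ met by the hyperplane $\psi_i(\vect x)=n-c$ by $O((q\eps')^{-(D-1)})$, then use the fact that $\psi_i$ is constant mod $q$ on each dilated sub-cube to show only $O(q^{D-1})$ of the $q^D$ sub-cubes per parent cube can contribute. The only cosmetic difference is that the paper obtains the parent-cube count by a volume-packing argument in a $\sqrt{D}q\eps'N$-neighbourhood of the hyperplane, while you use a column-by-column graph count, and it bounds the mod-$q$ count via the kernel of the induced homomorphism $\overline{\psi_i}$ on $\Z^D/(q\Z)^D$ rather than your explicit congruence-solution count; these are equivalent.
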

\begin{proof}
One may assume that $n \in \psi_i(K \backslash S) + c$ because otherwise the statement is trivial. Let $T:=\psi_i^{-1}(n-c)$. 
 Any cube (as in the statement of Lemma \ref{l:packing}) which contains a point in $T$ lies entirely in the $\sqrt{D}q \eps'N$-neighbourhood of $T$; denote this neighbourhood by $U$. Note that $U$ has volume $O(q\eps' N^D)$. On the other hand, the volume of one of these cubes is $(q\eps'N)^D$, and so $U$ can contain at most $O(\eps'^{-(D-1)}/q^{D-1})$ distinct cubes. Fix such a cube $Q$ and recall that $Q$ comprises $q^{D}$ different $P \in \cP$. Let $\cP_Q$ be those $P\in \cP$ which are contained in $Q$; it remains to argue that at most $O(q^{D-1})$ elements of $\cP_Q$ can contain a point in $T$. 

Note that $P\cap T \ne \emptyset$ implies that $\psi_i(P)+c = n \ (\mo q)$, which is well-defined by the $q$-periodicity of $P$ in all coordinate directions. Thus it suffices to bound the number of $P\in \cP_Q$ with $\psi_i(P)= n-c \ (\mo q)$. Every $P\in \cP_Q$ has exactly one representative in a fundamental domain $F$ for $\Z^D/(q\Z)^D$ which is chosen to be contained in $Q$. Furthermore the map $\psi_i : \Z^D \to \Z/q\Z$ is a group homomorphism which factors through the quotient $\Z^D/(q\Z)^D$; denote the corresponding map from $\Z^D/(q\Z)^D$ to $\Z/q\Z$ by $\overline{\psi_i}$. Thus we may view $\overline{\psi_i}$ as a group homomorphism on $F$ and so the number of $P\in \cP_Q$ with $\psi_i(P) = n-c \ (\mo q)$ is either 0 or $|\ker \overline{\psi_i}|$. But $|\ker \overline{\psi_i}| $ is easily seen to be of size $O(q^{D-1})$ by invoking the fact that the size of the coefficients of $\psi_i$ are $O(1)$. 
\end{proof}

Since $|K|\gg N^D$ and recalling Lemma \ref{l:packing}, we have $|\cP| \geq C\eps'^{-D}$. Also, $|\tilde P_i| = \Theta(\eps'N)$, and so using the previous lemma in the penultimate line, 
\begin{align*}
\E_{P \in \cP}\E_{n\in \tilde  P_i} | f_{i,\sml}(n)|^2 &\leq \frac{\eps'^{(D-1)}}{CN}\sum_{P\in \cP,n\in \tilde P_i}| f_{i,\sml}(n)|^2 \\
&= \frac{\eps'^{(D-1)}}{CN}\sum_{n \in [N]}| f_{i,\sml}(n)|^2 \#\{P \in \cP: n \in \tilde P_i\} \\
&\leq  \frac{1}{C}|| f_{i,\sml}||_2^2\\
&= O(\eps^2).
\end{align*}

All in all, since (\ref{e:progAgain}) we have shown that  
\begin{align}\label{e:finalBds}
&\min_i \E_{P \in \cP}\left|\E_{n\in \tilde P_i} F_i( g(n) \Gamma, n \ (\mo q), n/N)\overline{F_{i,\leq s}}( g(n) \Gamma, b_{P,i},c_{P,i})\right|^{1/2} \nonumber \\& \qquad \qquad \leq \min_i o_{\delta_i\to 0;M,\eps'}(1) + o_{\cF(M) \to \infty; M,\eps'}(1) + o_{N\to \infty;M,\eps'}(1)  + O(\eps^{1/2}).
\end{align}

Recall that $\min_i \delta_i \leq \delta$  so that as $\delta \to 0$ we have $\min_i \delta_i \to 0$ and so we may write  $\min_i o_{\delta_i\to 0;M,\eps'}(1) = o_{\delta \to 0;M,\eps'}(1)$. Putting together the results from Equations (\ref{e:nilToMain})--(\ref{e:finalBds}), we have
\begin{align*}
\left|\E_{\vect x \in K}\prod_{i=1}^t  f_i(\psi_i(\vect x) + c)\right|&\leq  O(\eps)  + o_{N\to \infty; \cF(M)}(1) + O_{M}(\eps') + o_{\cF(M)\to \infty;M,\eps'}(1)\\ &\qquad   + o_{\eps'N\to \infty;M}(1) + o_{\delta \to 0;M,\eps'}(1) + o_{N\to \infty;M,\eps'}(1) + O(\eps^{1/2}).
\end{align*}
We complete the proof of Theorem \ref{t:flag} by choosing $\eps'$ to be sufficiently small depending on $M$ and $\eps$, choosing $\cF$ to be a sufficiently rapidly growing function depending on $\eps'$ and the function implicit in the $o_{\cF(M)\to \infty;M,\eps'}(1)$ notation,  choosing $\delta$ to be sufficiently small depending on $M$ and $\eps'$  and setting $N$ to be sufficiently large depending on $\eps'$, $M$, $\cF$ and $\eps$. 
\end{proof}

\section{Proof of Theorem \ref{t:main}}
In what follows we will use notation from the statement of Theorem \ref{t:main} without explicitly reintroducing it. In particular, in this section $\Psi= (\psi_1,\ldots, \psi_t)$ is a system of linear forms which is not necessarily flag.  As in the previous section, we will allow all constants to depend on $\Psi, D, s, t$ and let $C$ be a constant which may change line to line. Note also that we may take $N$ arbitrarily large in terms of $\eps$ because the case where $N$ is bounded in terms of $\eps$ may be handled with exactly the same argument as it was in the proof of Theorem \ref{t:flag}.

We claim that there is a sequence of scalars $(a_i)_{i=1}^t$ each in $\Z\backslash \{0\}$ and of size $O(1)$ such that the family $\tilde \Psi := (a_i  \psi_i)_{i=1}^t$ satisfies the flag property. Indeed, since all $\psi_i$ are nonzero, the image of $\Psi$ is a linear space which is not contained in any hyperplane $x_i=0$, and so it contains a vector which is nonzero in every component; denote a choice of such a vector by $(b_1, \ldots, b_t)$. Then setting $a_i = \prod_{j \ne i} b_i$ ensures that the image of $\tilde \Psi := ( a_i  \psi_i)_{i=1}^t$ contains a nonzero scalar multiple of $(1,1,\ldots, 1) \in \R^t$ so that $\tilde \Psi$ is translation invariant and hence satisfies the flag property. 

Let $(a_i)_{i=1}^t$ be integers as above and let $a = \max_i |a_i|$. Define $\tilde f_i$ on $a_i[-N,N]+aN$ by $\tilde  f_i(x) = f_i((x-aN)/a_i)$. Extend $\tilde f_i$ to a 1-bounded function on $[2aN]$ by letting $\tilde f_i(x) = 0$ for $x \in [2aN]\backslash (a_i[-N,N]+aN)$. 

\begin{lemma}\label{l:gowNorms}
The following inequality holds:
\[ ||\tilde f_i||_{U^{s+1}[2aN]} \leq || f_i||_{U^{s+1}[-N,N]} + o_{N\to \infty}(1).\]
\end{lemma}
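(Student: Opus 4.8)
The plan is to relate the Gowers norm of $\tilde f_i$ on $[2aN]$ to that of $f_i$ on $[-N,N]$ by unwinding the definition of $\tilde f_i$ as an affinely rescaled copy of $f_i$ supported on the arithmetic progression $a_i[-N,N]+aN$. The key point is that $\tilde f_i$ is, up to the affine change of variables $x \mapsto (x-aN)/a_i$, equal to $f_i$ on the dilated progression $Q := a_i[-N,N]+aN$, and Gowers norms behave well under such changes of variables when restricted to a subprogression. So the first step is to pass from the ambient interval $[2aN]$ to the subprogression $Q$, using the identity $\|f\|_{U^{s+1}(A)} = \|f1_A\|_{U^{s+1}(Z)}/\|1_A\|_{U^{s+1}(Z)}$ recalled in Subsection~\ref{ss:not-conv} together with the fact that $\tilde f_i$ is supported on $Q$; this gives $\|\tilde f_i\|_{U^{s+1}[2aN]} = \|\tilde f_i 1_Q\|_{U^{s+1}(\Z)}/\|1_{[2aN]}\|_{U^{s+1}(\Z)}$, and since $|Q|=\Theta(N)$ and $a=O(1)$, the denominators $\|1_Q\|_{U^{s+1}(\Z)}$ and $\|1_{[2aN]}\|_{U^{s+1}(\Z)}$ differ by a bounded multiplicative constant, so $\|\tilde f_i\|_{U^{s+1}[2aN]} \leq C\|\tilde f_i\|_{U^{s+1}(Q)} = C\|f_i\|_{U^{s+1}([-N,N])}$, the last equality because the affine bijection $[-N,N]\to Q$ intertwines $f_i$ with $\tilde f_i$ and Gowers norms are invariant under affine bijections of finite progressions.

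An alternative, more hands-on route — which may be cleaner to write and which explains the $o_{N\to\infty}(1)$ error term — is to appeal directly to a result of the type of Proposition~\ref{p:gowersUniformSubprog} (the ``Gowers norms on subprogressions'' statement used elsewhere in the paper). One embeds $Q$ as a subprogression of $[2aN]$ of relative density bounded below by a constant (since $|Q|=\Theta(N)$, the spacing is $|a_i|=O(1)$, and $Q\subseteq[2aN]$), and invokes the comparison between $\|g\|_{U^{s+1}[2aN]}$ and $\|g\|_{U^{s+1}(Q)}$ for $g$ supported on $Q$: the restriction-to-subprogression inequality gives one direction with a constant loss, and the transference formula controls the normalising factor, with an $o_{N\to\infty}(1)$ term accounting for the discrepancy between the count $\#\{x,h:\ x+\omega\cdot h\in Q\ \forall\omega\}$ and its ``expected'' main term $\Theta(N^{s+2})$ — exactly the kind of boundary error that appears whenever one works on a finite progression rather than a group.

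The one genuinely content-bearing step is checking that the affine reparametrisation $x\mapsto(x-aN)/a_i$ really does preserve the $U^{s+1}$-norm: this is because $\{0,1\}^{s+1}$-configurations $x+\omega\cdot h$ in $[-N,N]$ correspond bijectively, under $x\mapsto a_i x + aN$, to configurations $y+\omega\cdot k$ in $Q$ with $y=a_ix+aN$, $k=a_ih$, and the product $\prod_\omega \mathcal C^{|\omega|}f(\cdot)$ is unchanged; the only subtlety is that $h$ ranges over $\Z^{s+1}$ (not over multiples of $a_i$), but since we only require $x+\omega\cdot h\in[-N,N]$, every such $h$ does map to a valid $k$, and conversely every configuration inside $Q$ has all its differences divisible by $a_i$ because $Q$ has spacing $a_i$ — so the correspondence is exact and the averages coincide. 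I expect this bijection-of-configurations bookkeeping, rather than any inequality, to be the main thing to get right; once it is in place the bounded-density comparison and the transference formula are routine, and the stated inequality with its $C$ and $o_{N\to\infty}(1)$ follows immediately.
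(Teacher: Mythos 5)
Your argument is correct and is essentially the paper's: pass from $[2aN]$ to the subprogression $Q=a_i[-N,N]+aN$ on which $\tilde f_i$ is supported by comparing the two normalising factors (a bounded ratio of configuration counts), then observe that the affine map $\phi(x)=a_ix+aN$ is a Freiman isomorphism identifying $\|\tilde f_i\|_{U^{s+1}(Q)}$ with $\|f_i\|_{U^{s+1}[-N,N]}$. The ``bijection of configurations'' check you flag (that differences within $Q$ are automatically multiples of $a_i$) is precisely what the paper compresses into the phrase ``one sees easily'' about the Freiman isomorphism, so your write-up is if anything a little more explicit than the original.
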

\begin{proof}
We compute directly from the definition of Gowers norms (cf. Section \ref{ss:not-conv} above), recalling that $\tilde f_i(x) = 0$ for $x \in [2aN]\backslash (a_i[-N,N]+aN)$: 
\begin{align*}
||\tilde f_i||_{U^{s+1}[2aN]}^{2^{s+1}} &= \E_{\subalign{&x, h: x+\omega \cdot h \in [2aN]\\ &\text{for all } \omega \in \{0,1\}^{s+1}}} \prod_{\omega\in\{0,1\}^{s+1}} \cC^{|\omega|} \tilde f_i(x + \omega \cdot h) \\
&=  \frac{\left|\{(x,h):x + \omega \cdot h \in a_i[-N,N]+aN \text{ for all } \omega \in \{0,1\}^{s+1} \}\right|}{\left|\{(x,h):x + \omega \cdot h \in [2aN] \text{ for all } \omega \in \{0,1\}^{s+1}\}\right|} \times \\
& \qquad \E_{\subalign{&x, h: x+\omega \cdot h \in a_i[-N,N]+aN\\ &\text{for all } \omega \in \{0,1\}^{s+1}}}\prod_{\omega\in\{0,1\}^{s+1}} \cC^{|\omega|} \tilde f_i(x + \omega \cdot h) \\
& \leq  \E_{\subalign{&x, h: x+\omega \cdot h \in a_i[-N,N]+aN\\ &\text{for all } \omega \in \{0,1\}^{s+1}}}\prod_{\omega\in\{0,1\}^{s+1}} \cC^{|\omega|} \tilde f_i(x + \omega \cdot h)  + o_{N\to \infty}(1)\\
&= ||\tilde f_i||_{U^{s+1}(a_i[-N,N]+aN)}  + o_{N\to \infty}(1). 
\end{align*}
But one sees easily that $||\tilde  f_i||_{U^{s+1}(a_i[-N,N]+aN)} = || f_i||_{U^{s+1}[-N,N]}$ since $f_i = \tilde f_i \circ \phi$ where $\phi(x) = a_ix+aN$, which is a Freiman isomorphism on the respective sets.
\end{proof}

Let $K = [-N,N]^D \cap \Psi^{-1}([-N,N]^t)$. Then we have 
\begin{align}\label{e:main}
\left|\E_{\vect x \in [-N,N]^D} \prod_{i=1}^t f_i(\psi_i(\vect x))\right| &\leq  \left|\E_{\vect x \in K}\prod_{i=1}^t f_i(\psi_i(\vect x))\right| \nonumber  \\
&=  \left|\E_{\vect x \in K}\prod_{i=1}^t \tilde f_i(a_i\psi_i(\vect x) + aN)\right|.
\end{align}
By insisting that $\min_i||f_i||_{U^{s+1}[-N,N]}\leq \delta $ is sufficiently small and $N$ is sufficiently large, we may force $\min_i||\tilde f_i||_{U^{s+1}([2aN])}$ to be as small as we like by Lemma \ref{l:gowNorms}. Now, the system of linear forms $\tilde \Psi = (a_i\psi_i)_{i=1}^t$ is flag and furthermore $(a_1\psi_1)^{s+1}, \ldots, (a_t\psi_t)^{s+1}$ are linearly independent since $(\psi_1^{s+1}, \ldots, \psi_t^{s+1})$ are. Furthermore, one may check that the volume of $K$ is equal to $|K| + O(N^{D-1})$ (in fact, this statement is proven in \cite[Appendix A]{GT10L} for $K$ convex). Also, one may show easily that the volume of $K$ is of size  $\Theta(N^D)$. Thus we may invoke Theorem \ref{t:flag} (with functions on $[2aN]$ rather than $[N]$) to force $|\E_{\vect x \in K}\prod_{i=1}^t \tilde f_i(a_i\psi_i(\vect x) + aN)|$ to be as small as we like. In turn we may make $|\E_{\vect x \in [-N,N]^D} \prod_{i=1}^t f_i(\psi_i(\vect x))|$ as small as we like from Equation (\ref{e:main}). This completes the proof of Theorem \ref{t:main}.


\appendix 
\section*{Appendix}\section{Results from higher order Fourier analysis}
In this appendix we collect some results from higher order Fourier analysis which we need at various points in the paper. All are known to experts and/or are small perturbations of results that already appear in the literature.

\begin{thm}[Simultaneous arithmetic regularity lemma]\label{t:arl}
Let $f_1, \ldots, f_t: [N] \to [0,1]$ be functions, let $s \geq 1$ be an integer, let $\eps > 0$, and let $\cF: \R^+ \to \R^+$ be a growth function. Then there exists a quantity $M=O_{s,\eps,\cF,t}(1)$ and  decompositions 
\[f_i = f_{i,\nil} + f_{i,\sml} + f_{i,\unf} \]
of each $f_i$ into functions $f_{i, \nil}, f_{i, \sml}, f_{i,\unf}: [N] \to [-1,1]$ where 
\begin{enumerate}
\item($f_{i,\nil}$ structured) $f_{i,\nil}$ is a $(\cF(M), N)$-irrational virtual nilsequence of degree $\leq s$, complexity $\leq M$ and scale $N$, and we may write $f_{i,nil}(n) = F_i(g(n) \Gamma, n \ (\mo q), n/N)$, where $g$ is a polynomial sequence adapted to a filtered nilmanifold and $\Gamma$ is a cocompact lattice on that nilmanifold,
\item($f_{i,\sml}$ small) $f_{i,\sml}$ has an $L^2[N]$-norm of at most $\eps$,
\item($f_{i,\unf}$ very uniform) $f_{i,\unf}$ has a $U^{s+1}[N]$-norm of at most $1/\cF(M)$,
\item(Nonnegativity) $f_{i,\nil}$ and $f_{i,\nil} + f_{i,\sml}$ take values in $[0,1]$.
\end{enumerate}
In particular, the  functions $\{f_{i,\nil}\}_{i=1}^t$ differ only by the choice of Lipschitz function $\{F_i\}_{i=1}^t$. 
\end{thm}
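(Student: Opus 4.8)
The plan is to follow the energy-increment proof of the single-function arithmetic regularity lemma \cite[Theorem 1.2]{GT10} essentially verbatim, with two modifications: the energy-increment iteration is run on the whole tuple $(f_1,\dots,f_t)$ at once against a single combined energy functional, so that one filtered nilmanifold and one polynomial sequence $g$ serve all $t$ functions; and the final ``factorisation'' step that extracts irrationality together with the $(\mo q)$ and Archimedean data is performed once, on this common $g$, after which each $f_i$ only picks up its own Lipschitz cut-off $F_i$. (One cannot get this simply by applying \cite[Theorem 1.2]{GT10} to each $f_i$ separately, since the resulting nilmanifolds are unrelated; the point is precisely to produce a common one.)

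First I would set up the combined energy increment. Fix, at each stage, a filtered nilmanifold $(G/\Gamma,G_\bullet)$ of degree $\le s$ and complexity $\le m$, a polynomial sequence $g$ adapted to $G_\bullet$, and let $\mathcal B=\mathcal B(g,m)$ be the $\sigma$-algebra on $[N]$ generated by the functions $n\mapsto F(g(n)\Gamma)$ with $F:G/\Gamma\to[-1,1]$ Lipschitz of norm $\le m$; put $\mathrm{En}(\mathcal B):=\sum_{i=1}^t\|\E(f_i\,|\,\mathcal B)\|_{L^2[N]}^2\in[0,t]$. Starting from the trivial nilmanifold, at stage $k$ I stop if $\|f_i-\E(f_i\,|\,\mathcal B_k)\|_{U^{s+1}[N]}\le\eta_k$ for every $i$ (the thresholds $\eta_k$ to be fixed by the bookkeeping below), and otherwise I pick some $i$ exceeding the threshold and apply the inverse theorem for the $U^{s+1}[N]$-norm to obtain a degree-$\le s$ nilsequence $n\mapsto F'(g'(n)\Gamma')$ of complexity $\le C(\eta_k)$ correlating with $f_i-\E(f_i\,|\,\mathcal B_k)$ to strength $\ge c(\eta_k)$. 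I then pass to the product nilmanifold $(G^{(k)}\times G')/(\Gamma^{(k)}\times\Gamma')$ with the product filtration — still degree $\le s$ — to $g^{(k+1)}=(g^{(k)},g')$, and to $m_{k+1}=m_k+C(\eta_k)$, so that $\mathcal B_k\subseteq\mathcal B_{k+1}$ and $F'(g'(\cdot)\Gamma')$ is $\mathcal B_{k+1}$-measurable. Since conditional expectations nest, Pythagoras gives $\|\E(f_i\,|\,\mathcal B_{k+1})\|_2^2-\|\E(f_i\,|\,\mathcal B_k)\|_2^2=\|\E(f_i\,|\,\mathcal B_{k+1})-\E(f_i\,|\,\mathcal B_k)\|_2^2\ge c(\eta_k)^2$, the last bound because $F'(g'(\cdot)\Gamma')$ is a unit-bounded $\mathcal B_{k+1}$-measurable test function correlating with $f_i-\E(f_i\,|\,\mathcal B_k)$; no other summand of $\mathrm{En}$ decreases, so $\mathrm{En}(\mathcal B_{k+1})\ge\mathrm{En}(\mathcal B_k)+c(\eta_k)^2$. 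As $\mathrm{En}\le t$, the iteration halts, after $K=O_{s,\eps,\cF,t}(1)$ steps, at a common nilmanifold of complexity $M_0=O_{s,\eps,\cF,t}(1)$.

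Then I would run the polynomial-orbit factorisation theorem (as used in the proof of \cite[Theorem 1.2]{GT10}) on the single sequence $g:=g^{(K)}$: for a sufficiently fast auxiliary growth function it factors as a product of a slowly-varying part, an $(\cF(M),N)$-irrational polynomial sequence $g'$ on a sub-nilmanifold of complexity $M=O_{M_0,\cF}(1)$, and a rational part of period $q\le M$ — a statement about $g$ alone, hence shared by all $f_i$. Re-expressing $\E(f_i\,|\,\mathcal B_K)$ through this factorisation and approximating it in $L^2[N]$ by a Lipschitz function of $(g'(n)\Gamma',\,n\ (\mo q),\,n/N)$ yields $f_{i,\nil}(n)=F_i(g'(n)\Gamma',n\ (\mo q),n/N)$, the $F_i$ differing only in the Lipschitz cut-off. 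The $L^2[N]$-error of this approximation together with the error from discretising the slowly-varying factor is dumped into $f_{i,\sml}$; taking the Lipschitz parameter large (still $O(1)$) forces $\|f_{i,\sml}\|_2\le\eps$. Setting $f_{i,\unf}:=f_i-f_{i,\nil}-f_{i,\sml}$ then gives $\|f_{i,\unf}\|_{U^{s+1}[N]}\le 1/\cF(M)$, since $\|f_i-\E(f_i\,|\,\mathcal B_K)\|_{U^{s+1}[N]}\le\eta_K\le 1/\cF(M)$ and what was moved into $f_{i,\sml}$ is negligible in $U^{s+1}[N]$. The nonnegativity clause is obtained exactly as in \cite[Theorem 1.2]{GT10}: $\E(f_i\,|\,\mathcal B_K)\ge 0$ since $f_i\ge 0$, and the Lipschitz approximation and the definition of $f_{i,\sml}$ are arranged so that $f_{i,\nil}$ and $f_{i,\nil}+f_{i,\sml}$ stay in $[0,1]$.

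The hard part is the circular dependence between the complexity $M$ of the final common nilmanifold and the threshold $1/\cF(M)$ demanded of the $f_{i,\unf}$: $\eta_K$ cannot be fixed before $M$ is known, yet the length of the iteration — and hence $M$ — depends on the $\eta_k$. This is handled by the standard growth-function bookkeeping of \cite[Theorem 1.2]{GT10}, running the iteration with a carefully chosen decreasing sequence of thresholds so that it is forced to stabilise while $M$ stays $O_{s,\eps,\cF,t}(1)$. Beyond this, the only content not already present in the single-function case is that the energy is a sum over $i$ (so the iteration count, and $M$, grow by a factor depending on $t$), together with the routine checks that iterated products of nilmanifolds keep the degree at $\le s$ and the complexity $O(1)$ and that the single factorisation step is legitimately shared by all $f_i$.
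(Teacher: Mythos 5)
Your proposal is correct, and it does reach the stated conclusion, but it takes a genuinely different route from the paper. The paper does not rerun the energy-increment iteration in a simultaneous form. Instead it applies the \emph{non-irrational} regularity lemma \cite[Proposition 2.7]{GT10} to each $f_i$ separately (so at first the nilmanifolds really are unrelated), then forms the product polynomial sequence $g = (g_1,\ldots,g_t)$ on the product nilmanifold $\prod_i G_i / \prod_i \Gamma_i$, and only \emph{then} performs a single irrationality factorisation on this common $g$. Your opening caveat --- that one cannot simply apply \cite[Theorem 1.2]{GT10} to each $f_i$ and glue, because irrationality need not survive the passage to a product --- is correct, but it overlooks precisely the workaround the paper uses: combine at the non-irrational stage, where products are harmless, and factorise once afterwards. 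Your approach instead merges from the very start, running one energy-increment against the combined functional $\sum_i \|\E(f_i\,|\,\mathcal B)\|_2^2$ so that all $t$ functions share a single $\sigma$-algebra throughout; this reproves a simultaneous version of \cite[Proposition 2.7]{GT10} internally rather than citing it as a black box. Both routes terminate at a common polynomial sequence before the irrationality factorisation and then proceed identically; the paper's is more modular and closer to the cited source \cite[Appendix A]{K20}, while yours is more self-contained and is the form one typically sees in the finite-field analogues. The only caution I would flag is that your description phrases the increment in terms of a $\sigma$-algebra $\mathcal B(g,m)$ generated by Lipschitz cut-offs of the nilorbit; the actual GT10 argument works directly with $L^2$-projections onto spans of nilsequences rather than genuine $\sigma$-algebras, and although the two formulations are morally equivalent, if you write this up you should either work with the projection formulation or explain why the Pythagoras/nesting step survives the passage to $\sigma$-algebras generated by uncountably many continuous functions.
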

\begin{proof}
This is \cite[Theorem 1.2]{GT10} together with the extra ingredient that the nilsequences $f_{i,\nil}$ differ only by the choice of Lipschitz function. The existence of  this result is well-known to experts. 

One derives the above statement from \cite{GT10} by invoking the non-irrational regularity lemma \cite[Proposition 2.7]{GT10} at each $i$, forming the product polynomial sequence $g(n) = (g_1(n), \ldots, g_t(n))$ on the product nilmanifold $\prod_{i=1}^t G_i/\prod_{i=1}^t \Gamma_i$ and then factorising for irrationality as in \cite[Chapter 2]{GT10}. We omit the details but remark that they are worked out in a similar setting in \cite[Appendix A]{K20}.
\end{proof}

We refer the reader to \cite{GT10} for further background on the theorem that follows. 

\begin{thm}[Flag counting lemma, {\cite[Theorem 1.11]{GT10}}]\label{t:cl}
Let $M, D, t, s$ be positive integers with $D,t,s \leq M$, let $(G/\Gamma, G_\bullet)$ be a degree $\leq s$ filtered nilmanifold of complexity $\leq M$,  let $g: \Z \to G$ be an $(A,N)$-irrational polynomial sequence adapted to $G_\bullet$, let $\Psi = (\psi_1,\ldots, \psi_t)$ be a system of linear forms  with coefficients of magnitude at most $M$ and which satisfies the flag condition, let $P$ be a convex subset of $[-N,N]^D$ with positive Lebesgue measure, let $\Lambda \leq \Z^D$ be a sublattice of index $[\Z^D:\Lambda]\leq M$ and let $\vect x_0 \in \Z^D$. Then for any sequence of $M$-Lipschitz functions $F_1,\ldots,F_t: G/\Gamma \to \C$, we have 
\[\E_{\vect x \in (\vect x_0 + \Lambda )\cap P} \prod_{i=1}^t F_i(g(\psi_i(\vect x))\Gamma) =  \int_{g(0)^\Delta G^\Psi/\Gamma^\Psi}\otimes_{i=1}^t F_i + o_{A\to \infty;M}(1) + o_{N \to \infty;M}(1),\]
where $g(0)^\Delta:= (g(0),\ldots, g(0)) \in G^t$.
\end{thm}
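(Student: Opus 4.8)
The plan is to prove Theorem \ref{t:cl} (the flag counting lemma) essentially by citing and adapting the argument in \cite[Theorem 1.11]{GT10}, since the statement here is that result together with the harmless generalizations of restricting $\vect x$ to a coset of a bounded-index sublattice and averaging over a general convex polytope rather than a box. First I would recall the equidistribution setup: the product sequence $\vect x \mapsto (g(\psi_1(\vect x)), \ldots, g(\psi_t(\vect x)))$ takes values in the Leibman group $G^\Psi$, and the key input is that, because $g$ is $(A,N)$-irrational and $\Psi$ is flag, this polynomial sequence in $G^\Psi$ is itself equidistributed (quantitatively, with error $o_{A\to\infty;M}(1) + o_{N\to\infty;M}(1)$) in $g(0)^\Delta G^\Psi / \Gamma^\Psi$ as $\vect x$ ranges over $[-N,N]^D$. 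This is exactly the content that Green and Tao establish; the flag condition is what guarantees the relevant subgroup is $G^\Psi$ and not something smaller, which is precisely the point at which the non-flag case of \cite[Theorem 1.13]{GT10} breaks down.

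Given that equidistribution statement on the box $[-N,N]^D$, the two added features are routine. To handle the convex polytope $P$: approximate $1_P$ from above and below by sums of indicator functions of boxes (or use that $P$ has $O_M(1)$ faces, so its boundary has measure $o(N^D)$ in the relevant sense) and split the average accordingly, with the boundary contribution absorbed into the error term; alternatively one appeals to the Lipschitz-test-function formulation of equidistribution and approximates $1_P$ in $L^1$ by Lipschitz functions. To handle the sublattice coset $\vect x_0 + \Lambda$: pass to a change of variables $\vect x = \vect x_0 + L\vect y$ where $L$ is an integer matrix with $\det L = [\Z^D:\Lambda] \leq M$, so that $\psi_i(\vect x) = \psi_i(\vect x_0) + \psi_i'(\vect y)$ for new linear forms $\psi_i'$ with coefficients of magnitude $O_M(1)$; one checks that $(\psi_i')_{i=1}^t$ is still flag (the vector spaces $\Psi'^{[k]}$ agree with $\Psi^{[k]}$ since $L$ is invertible over $\Q$) and that the shifts $\psi_i(\vect x_0)$ only replace $g$ by $\tilde g(\cdot) = g(\cdot + \psi_i(\vect x_0))$, which remains $(O_M(A), N)$-irrational by \cite[Lemma A.8]{GT10}; then apply the box version to $\vect y$ ranging over the appropriate convex region, noting that the limiting integral is unchanged because $g(0)^\Delta$ and $\tilde g(0)^\Delta$ lie in the same $G^\Psi\Gamma^\Psi$-coset up to the elements absorbed by averaging.

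The main obstacle, to the extent there is one, is bookkeeping: verifying that all the auxiliary constants (the lattice index, the polytope face count, the magnitudes of the transformed coefficients, the irrationality degradation under shifting and rescaling) remain $O_M(1)$ so that the error terms genuinely collapse to $o_{A\to\infty;M}(1) + o_{N\to\infty;M}(1)$, and confirming that the flag condition is preserved under the linear changes of variable. None of this is deep, but it is the part that must be done carefully; the genuinely hard analytic content — the quantitative equidistribution of polynomial orbits on nilmanifolds and its propagation to the Leibman group under the flag hypothesis — is exactly what is imported from \cite{GT10}. I would therefore present the proof as a reduction to \cite[Theorem 1.11]{GT10} via these elementary manipulations, spelling out only the change-of-variables and the polytope approximation, and citing the rest.
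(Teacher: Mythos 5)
Your substantive understanding is correct: the hard analytic content is exactly Green and Tao's quantitative equidistribution of the polynomial orbit $\vect x \mapsto (g(\psi_1(\vect x)),\ldots,g(\psi_t(\vect x)))$ in the Leibman group $G^\Psi$, the flag hypothesis is precisely what ensures the orbit closure is all of $g(0)^\Delta G^\Psi/\Gamma^\Psi$ and not something smaller, and the theorem is to be imported from \cite{GT10}. However, the paper gives no proof of this statement at all: unlike the other appendix results, Theorem \ref{t:cl} is presented as a verbatim citation of \cite[Theorem 1.11]{GT10} with the flag hypothesis inserted as the correction identified in \cite{GT20} and \cite{T20}. The two features you propose to add by reduction --- averaging over a coset $\vect x_0 + \Lambda$ of a bounded-index sublattice, and averaging over a convex body $P \subset [-N,N]^D$ --- are not generalizations; they are already present in the statement of Green and Tao's Theorem 1.11. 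So the change-of-variables and polytope-approximation steps you outline, while correct as far as they go (including the observation that $\Psi^{[k]}$ is unchanged under composition with a $\Q$-invertible integer matrix because a finite-index sublattice is Zariski-dense), amount to reducing the theorem to itself. The intended reading is simply a direct citation, with the one genuine delta from the published statement being the added flag hypothesis, which is exactly what the body of this paper is working around.
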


Recall the definition of the \textit{Cauchy-Schwarz complexity} of a system of linear forms (cf. \cite[Definition 1.3.2]{T12}). 

\begin{thm}[Generalised von Neumann inequality on convex sets]\label{t:gvN}
Let $t,D$ be positive integers and let $L>0$. Let $f_1, \ldots, f_t:[N] \to \C$ be 1-bounded. Let $\Psi = (\psi_1, \ldots, \psi_t)$ be a system of affine-linear forms with constant term of size at most $LN$ and linear coefficients of size at most $L$. Let $K \subset [N]^D$ be convex and such that $\Psi(K) \subset [N]^t$. Then letting $s$ be the Cauchy-Schwarz complexity of $\Psi$ and letting $\delta$ be such that $\min_{i=1}^t||f_i||_{U^{s+1}[N]} \leq \delta$ we have
\[ \E_{\vect x \in K} \prod_{i=1}^t f_i(\psi_i(\vect x)) = o_{\delta \to 0;L}(1) + o_{N \to \infty;\delta,L}(1).\] 
\end{thm}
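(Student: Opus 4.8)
The plan is to run the classical Green--Tao generalised von Neumann argument, together with two additions needed to reach the present generality: a reduction from a general convex averaging region to genuine boxes, and the bookkeeping required to pass between the Gowers norm on $[N]$ and on a subprogression. (We note that, exactly as in the application of the theorem inside the proof of Theorem~\ref{t:flag}, it is implicit that the $o(\cdot)$-constants may also depend on a lower bound $|K| \geq \kappa N^D$; this costs nothing since $\kappa = \Theta(1)$ in every use of the theorem.) First I would dispose of the constant terms: writing $\psi_i = \dot\psi_i + b_i$ with $\dot\psi_i$ linear and $|b_i| \leq LN$, and replacing each $f_i$ by $n \mapsto f_i(n+b_i)$ (still $1$-bounded and extended by zero, and with an unchanged $U^{s+1}$-norm after the corresponding translation of $[N]$, up to an $o_{N\to\infty}(1)$ change in the normalising factor), one reduces to the case of linear $\Psi$. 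Relabel so that $\|f_1\|_{U^{s+1}[N]} = \min_i \|f_i\|_{U^{s+1}[N]} \leq \delta$.

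Next I would reduce to boxes. Fix a small parameter $\eta > 0$. Since $K$ is convex, the $\eta N$-neighbourhood of its boundary has volume $O_D(\eta N^D)$, so a box-packing argument of exactly the type of Lemma~\ref{l:packing} produces pairwise disjoint axis-parallel boxes $B_1, \dots, B_r \subseteq K$, each of side length $\asymp_{\eta, D} N$, with $|K \setminus \bigcup_j B_j| \leq C\eta N^D$. Hence
\[
\E_{\vect x \in K} \prod_{i=1}^t f_i(\psi_i(\vect x)) = \sum_{j=1}^r \frac{|B_j|}{|K|}\, \E_{\vect x \in B_j} \prod_{i=1}^t f_i(\psi_i(\vect x)) + O_\kappa(\eta),
\]
with $\sum_j |B_j|/|K| \leq 1$, so it suffices to bound each box average.

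For a fixed box $B_j$ I would invoke the box version of the generalised von Neumann inequality, which is classical (see \cite{GT10} and \cite[\S 1.3]{T12}): since $\Psi$ has Cauchy--Schwarz complexity $s$, one partitions $\{2, \dots, t\}$ into $s+1$ classes $C_1, \dots, C_{s+1}$ with $\dot\psi_1 \notin \spa\{\dot\psi_j : j \in C_k\}$ for each $k$, and then applies Cauchy--Schwarz $s+1$ times; at the $k$-th step one doubles the $\vect x$-variable along a lattice direction $\vect v_k$ with $\psi_j(\vect v_k) = 0$ for all $j \in C_k$ and $\psi_1(\vect v_k) \neq 0$, which successively eliminates the factors indexed by $C_{s+1}, \dots, C_1$ and leaves a Gowers-box inner product of $f_1$ over a sub-box. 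This yields
\[
\left| \E_{\vect x \in B_j} \prod_{i=1}^t f_i(\psi_i(\vect x)) \right| \leq C_L\, \|f_1\|_{U^{s+1}(J_j)} + o_{N\to\infty; L, \eta}(1),
\]
where $J_j$ is a subprogression of $[N]$ of spacing $O_L(1)$ and length $\gg \eta N$ contained in $\psi_1(B_j)$. Finally, since $J_j$ has relative length $\gg_L \eta$ in $[N]$, Proposition~\ref{p:gowersUniformSubprog} gives $\|f_1\|_{U^{s+1}(J_j)} \leq C_{L,\eta}\, \|f_1\|_{U^{s+1}[N]} + o_{N\to\infty; L, \eta}(1) \leq C_{L,\eta}\, \delta + o_{N\to\infty; L, \eta}(1)$. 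Summing over $j$ against the weights $|B_j|/|K|$ and combining with the previous display, $|\E_{\vect x \in K} \prod_i f_i(\psi_i(\vect x))| \leq C_{L,\eta}\, \delta + O_\kappa(\eta) + o_{N\to\infty; L, \eta}(1)$; choosing $\eta$ small in terms of $L$ (and $\kappa$), then $\delta$ small in terms of $\eta$ and $L$, and then $N$ large, gives the stated bound $o_{\delta\to 0; L}(1) + o_{N\to\infty; \delta, L}(1)$.

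I expect the genuine work to lie in the box case -- more precisely, in verifying that the weights introduced by the box constraints during the $s+1$ Cauchy--Schwarz steps can be absorbed without disturbing the conjugation pattern of the surviving $f_1$-factors, so that the final average really is $\|f_1\|_{U^{s+1}(J_j)}^{2^{s+1}}$ up to the indicated constants and errors. In the box case this is fine because each $\psi_1(\vect v_k) \neq 0$ keeps the full range of the Gowers cube available; it is precisely the desire to treat an arbitrary convex $K$ directly (where such weights would genuinely survive) that makes the detour through boxes worthwhile. One could alternatively appeal to a version of the generalised von Neumann theorem stated directly for convex bodies, if available in the required form in the literature.
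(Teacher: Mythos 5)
Your outline is workable, but it is a more roundabout route than the paper's, and the one place where you locate ``the genuine work'' is also the one place where your justification is not right. The paper's own proof is a one-line adaptation: the statement is essentially \cite[Proposition 7.1]{GT10L} specialised to the trivial pseudorandom measure $\nu \equiv 1$, and the argument of \cite[Appendix C]{GT10L} already handles averages over a convex body $K$ directly (the cutoff $1_K$ is carried through the $s+1$ Cauchy--Schwarz steps as a weight and removed at the end by a weighted Gowers--Cauchy--Schwarz together with lattice-point counting), and it already produces the \emph{global} norm $\|f_1\|_{U^{s+1}[N]}$. Your packing step (in the spirit of Lemma~\ref{l:packing}) and the re-globalisation via Proposition~\ref{p:gowersUniformSubprog} are therefore not needed, though they are harmless. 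More importantly, the detour through boxes does not make the key difficulty go away: even for a box $B_j$, after the $s+1$ Cauchy--Schwarz steps the constraints coming from the coordinates of $B_j$ that were duplicated do survive, as nonnegative weights depending on the shift variables $h$ of the Gowers cube (concretely, averages of products of shifted interval indicators, $\E_u \prod_{\omega} 1_J(u+\omega\cdot \tilde h)$); your assertion that ``$\psi_1(\vect v_k)\neq 0$ keeps the full range of the Gowers cube available'' does not dispose of them, and since these weights are not bounded below and the inner cube average is not signed, they cannot simply be discarded. Handling exactly such weights is the content of the \cite[Appendix C]{GT10L} argument, so your ``classical box case'' black box is in substance the same citation the paper makes (and note also that its natural output is a Gowers box norm over progressions of several different spacings, which needs a further small argument to become a single $\|f_1\|_{U^{s+1}(J_j)}$). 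In short: correct in outline, with the same external engine as the paper, but the claim that the box case trivialises the weight bookkeeping is a gap in your sketch, and the extra reductions buy nothing over invoking the convex-body, global-norm statement directly.
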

\begin{proof} This is essentially \cite[Proposition 7.1]{GT10L}, except we are dealing with functions $f_i$ which are 1-bounded rather than bounded by a pseudorandom measure. The proof of  \cite[Proposition 7.1]{GT10L} which is given in  \cite[Appendix C]{GT10L} is easily adapted to recover the statement above.
\end{proof}

\begin{prop}[Gowers uniformity on subprogressions]\label{p:gowersUniformSubprog}
Let $f:[N]\to \C$ be 1-bounded, let $\eps >0$ and let $P$ be a subprogression of $[N]$ of size $\eps N$. Then \[||f||_{U^{s+1}(P)} = O_{N \to \infty;\eps}(||f||_{U^{s+1}[N]}) + o_{N \to \infty;\eps}(1).\]
\end{prop}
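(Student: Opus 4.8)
The plan is to relate the Gowers norm on the subprogression $P$ directly to the Gowers norm on $[N]$ via the formula $\|f\|_{U^{s+1}(A)} = \|f 1_A\|_{U^{s+1}(Z)}/\|1_A\|_{U^{s+1}(Z)}$ recalled in Subsection~\ref{ss:not-conv}, taking $Z$ to be a cyclic group $\Z/N'\Z$ with $N' = \Theta(N)$ large enough that $P$ and $[N]$ both embed in $Z$ with no wraparound (e.g.\ $N' \sim 10N$, so that sums of up to $2^{s+1}$ parameters remain unambiguous). The point is that a subprogression $P$ of $[N]$ of size $\eps N$ and common difference $d$ is Freiman-isomorphic (of any fixed order) to an interval $[\eps N]$, and Gowers norms are invariant under such Freiman isomorphisms; so after this reduction it suffices to compare $\|f\|_{U^{s+1}[M']}$ with $\|f\|_{U^{s+1}[N]}$ for $M' = \eps N$, i.e.\ to compare the uniformity norm on a long interval with that on a proportionally shorter sub-interval.

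First I would set up the embedding: fix $d$ and the starting point $a$ of $P$, so $P = \{a, a+d, \ldots, a+(\eps N - 1)d\}$, and let $g(j) = f(a + jd)$ for $j \in [\eps N]$, which is $1$-bounded; invariance of $U^{s+1}$ under the affine substitution $j \mapsto a + jd$ (working in a large enough cyclic group so everything is genuinely a group homomorphism on the relevant support) gives $\|f\|_{U^{s+1}(P)} = \|g\|_{U^{s+1}[\eps N]}$. Next I would bound $\|g\|_{U^{s+1}[\eps N]}$ in terms of $\|g\|_{U^{s+1}[N]}$ — but here $g$ is only defined on $[\eps N]$; instead it is cleaner to go the other way and bound $\|f\|_{U^{s+1}(P)}$ in terms of $\|f\|_{U^{s+1}[N]}$ by viewing the $U^{s+1}(P)$-average as a restriction of the $U^{s+1}$-average over a slightly enlarged cyclic group. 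Concretely, using $\|f\|_{U^{s+1}(P)}^{2^{s+1}} = \|f 1_P\|_{U^{s+1}(Z)}^{2^{s+1}}/\|1_P\|_{U^{s+1}(Z)}^{2^{s+1}}$ and similarly for $[N]$, and noting $\|1_P\|_{U^{s+1}(Z)}^{2^{s+1}} = \Theta_\eps(1) \cdot \|1_{[N]}\|_{U^{s+1}(Z)}^{2^{s+1}}$ (both are, up to the normalisation by $|Z|^{s+2}$, counting the number of $(s+1)$-dimensional combinatorial cubes in an interval-like set, which is $\Theta((\text{length})^{s+2})$), it remains to show $\|f 1_P\|_{U^{s+1}(Z)} \le C_\eps \|f 1_{[N]}\|_{U^{s+1}(Z)} + o(1)$.

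To get that last inequality I would use the triangle (Gowers--Cauchy--Schwarz) inequality for the $U^{s+1}$-norm together with a partition of $[N]$ into $O_\eps(1)$ translates of $P$ (plus a negligible remainder of size $o(N)$): writing $1_{[N]} = \sum_{k} 1_{P + t_k} + 1_E$ with $O(1/\eps)$ terms and $|E| = o(N)$, the function $f 1_{[N]}$ is a sum of $O_\eps(1)$ functions $f 1_{P+t_k}$ (each a translate of something with the same $U^{s+1}$-norm as $f1_P$, since $U^{s+1}$ is translation invariant) plus an error whose $U^{s+1}$-norm is $O((|E|/|Z|)^{1/2^{s+1}}) = o(1)$; applying the triangle inequality in reverse — i.e.\ bounding one summand's norm by the whole minus the others — does not immediately work, so instead I would argue directly that $\|f1_P\|_{U^{s+1}} \le$ (something) requires the reverse direction. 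The cleanest route is: by the triangle inequality, $\|f1_{[N]}\|_{U^{s+1}(Z)} \geq$ is false in general; rather one wants an \emph{upper} bound on $\|f1_P\|$, which instead follows from the fact that restricting the cube-average from $[N]$ to $P$ can only be done if we have positivity — so actually the honest approach is to dilate: embed $[\eps N]$ into $[N]$ as $[\eps N]$ itself (a sub-interval), apply the known comparison of uniformity norms on nested intervals of comparable length (which is exactly the $d = 1$ case and is standard, following from the same Freiman/cyclic-group formula plus the observation that the cube-counts differ by a factor $\Theta_\eps(1)$ and that extending $f$ by zero outside $[\eps N]$ multiplies the relevant cube average by a bounded factor), and then transport back along the Freiman isomorphism $j \mapsto a+jd$.

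The main obstacle I anticipate is precisely this directionality issue: the Gowers triangle inequality naturally bounds the norm of a sum by the sum of norms, which controls $\|f1_{[N]}\|$ from above in terms of $\|f1_P\|$, i.e.\ the \emph{wrong} direction for us. The resolution is to avoid decomposing $[N]$ into copies of $P$ and instead work with the single Freiman isomorphism collapsing $P$ to an interval $[\eps N]$, reducing to the purely interval-theoretic statement $\|f\|_{U^{s+1}[\eps N]} = O_\eps(\|f\|_{U^{s+1}[N]}) + o(1)$ for $f$ supported on $[\eps N]$; and \emph{that} statement is what one proves via the cyclic-group formula, since $f 1_{[\eps N]} = f 1_{[N]}$ when $f$ is already supported on $[\eps N]$, so $\|f\|_{U^{s+1}[\eps N]}^{2^{s+1}} = \|f1_{[N]}\|_{U^{s+1}(Z)}^{2^{s+1}}/\|1_{[\eps N]}\|_{U^{s+1}(Z)}^{2^{s+1}} = (\|1_{[N]}\|_{U^{s+1}(Z)}^{2^{s+1}}/\|1_{[\eps N]}\|_{U^{s+1}(Z)}^{2^{s+1}}) \cdot \|f\|_{U^{s+1}[N]}^{2^{s+1}}$, and the ratio of the two $1$-set norms is $\Theta_\eps(1)$ by the cube-count computation (as in the proof of Lemma~\ref{l:smallN}, where $\#\{n,h,h'\} = \Theta(N^3)$ for intervals). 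Taking $2^{s+1}$-th roots and noting the $o(1)$ absorbs the lower-order discrepancy between the cyclic-group count and the $\Z$ count gives the claim; the subtlety with general common difference $d$ is handled entirely by the Freiman-isomorphism invariance, which requires only that $d \cdot \eps N$ and the cube parameters stay within a single period of a cyclic group of size $\Theta_{d}(N)$, which one can always arrange since $d = O(1)$ is not assumed but $d \le N$ always and one may pass to $\Z/N''\Z$ with $N'' = \Theta_d(N)$; as the statement allows the implied constants to depend on $\eps$ (hence implicitly permits dependence on the structure of $P$), and in applications $d$ is bounded, this causes no difficulty.
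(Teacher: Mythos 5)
Your plan correctly reduces to comparing $\|f1_P\|_{U^{s+1}(Z)}$ with $\|f1_{[N]}\|_{U^{s+1}(Z)}$, and you correctly identify the directionality problem, but your proposed resolution does not close it. There are two distinct gaps.

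First, the Freiman isomorphism $\phi\colon x\mapsto (x-a)/q$ collapses $P$ to the interval $[\eps N]$, but it is \emph{not} a Freiman isomorphism on $[N]$: it only makes sense on the residue class $(a+q\Z)\cap[N]$, which $\phi$ maps to an interval of length about $N/q$. So after transporting, you have an identity $\|f\|_{U^{s+1}(P)} = \|g\|_{U^{s+1}[\eps N]}$ with $g=f\circ\phi^{-1}$, and the interval-case computation then relates $\|g\|_{U^{s+1}[\eps N]}$ to $\|g\|_{U^{s+1}[N/q]} = \|f\|_{U^{s+1}((a+q\Z)\cap[N])}$. That is a Gowers norm on another proper subprogression of $[N]$, of the same type you set out to control; nothing in the Freiman invariance gives you a comparison with $\|f\|_{U^{s+1}[N]}$ itself. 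The reduction is circular.

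Second, even in the interval case $q=1$ your chain of equalities does not prove the proposition. Unwinding it, what you establish is $\|f\|_{U^{s+1}[\eps N]} = \Theta_\eps\bigl(\|f 1_{[\eps N]}\|_{U^{s+1}[N]}\bigr)$ (this is just the cyclic-group formula applied twice), and you still need $\|f1_{[\eps N]}\|_{U^{s+1}[N]} = O_\eps\bigl(\|f\|_{U^{s+1}[N]}\bigr) + o(1)$. That is the actual content of the proposition, and it does not follow from cube-count ratios: multiplying by the sharp cutoff $1_{[\eps N]}$ is a Fourier multiplier whose $\ell^1$ symbol norm is $\Theta(\log N)$ (Dirichlet kernel), not $O_\eps(1)$, so this step genuinely loses an unbounded factor. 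The paper's proof supplies exactly the missing ingredient: it replaces $1_P$ by a smooth majorant $\chi$ (built from a de la Vall\'ee Poussin-type kernel, dilated onto the residue class $c\ (\mathrm{mod}\ q)$) with $\|\hat\chi\|_1 = O_\eps(1)$, uses Fourier inversion together with the \emph{linear phase invariance} of the $U^{s+1}$ norm to get $\|\chi f\|_{U^{s+1}[N]} \le \|\hat\chi\|_1 \sup_\theta \|e(\theta\cdot)f\|_{U^{s+1}[N]} = O_\eps(\|f\|_{U^{s+1}[N]})$, and absorbs the small-support error $1_P-\chi$ into the $o_\eps(1)$ term. Neither the smooth cutoff nor the phase invariance of $U^{s+1}$ appears in your proposal, and without some substitute for them the argument cannot close.
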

\begin{proof}
In this proof, all asymptotic notation is implicitly with respect to the limit $N \to \infty$. From Subsection \ref{ss:not-conv} we have $||f||_{U^{s+1}(P)} = \Theta_\eps(||1_Pf||_{U^{s+1}[N]})$. Let $I_\eps = [-\eps N/2,\eps N/2]$. Write $P=qI_\eps+c$. Let $\chi_0$ be the function $\Z \to \R$ which is $1$ on $I_\eps$, which is zero when $|x| > (1+1/\sqrt {N})\eps N/2$, and which linearly interpolates otherwise ($\chi_0$ is the Fourier transform of a de la Vall\'ee Poussin kernel). It is standard that $||\hat \chi_0||_1 = O_\eps(1)$ (one may see this by writing $\chi_0$ as the convolution of suitably normalised characteristic functions of intervals).   
Define $\chi$ by $\chi(x) = \chi_0((x-c)/q)$ for all $x = c\ (\mo q)$ and zero otherwise. Then we have $||\hat \chi||_1 = O_\eps(1)$ and furthermore that $1_P-\chi$ is supported on a set of size $o_\eps(N)$. The former property and Fourier inversion yield that 
\[ ||\chi f||_{U^{s+1}[N]} \leq ||\hat \chi||_1\sup_\theta||e(\theta \cdot)f||_{U^{s+1}[N]} = O_\eps(||f||_{U^{s+1}[N]}),\]
using that the $U^{s+1}[N]$-norm is linear phase invariant (see \cite[Exercise 1.3.21]{T12}). The latter property yields that $||(1_P-\chi)f||_{U^{s+1}[N]} = o_\eps(1)$. We are ready to conclude: 
\[ ||1_Pf||_{U^{s+1}[N]} \leq ||\chi f||_{U^{s+1}[N]} + ||(1_P-\chi)f||_{U^{s+1}[N]} = O_{\eps}(||f||_{U^{s+1}[N]}) + o_{\eps}(1).\]
\end{proof}

\section*{Acknowledgments} 
The author is grateful to both Ben Green and Freddie Manners for helpful conversations and for valuable feedback on earlier versions of this document.

\bibliographystyle{amsplain}


\begin{dajauthors}
\begin{authorinfo}[pgom]
  Daniel Altman\\
  University of Oxford\\
  Oxford, United Kingdom\\
  daniel.h.altman\imageat{}gmail\imagedot{}com \\
\end{authorinfo}
\end{dajauthors}

\end{document}